\documentclass[11pt,reqno]{amsart}
\usepackage{eurosym}
\usepackage{amsmath,amstext,amssymb,amscd,hyperref}
\usepackage{verbatim}
\usepackage{enumerate}
\usepackage{mathrsfs}
\usepackage[dvipsnames,usenames]{xcolor}
\usepackage{amsthm}
\newtheorem{theorem}{Theorem}[section]
\newtheorem{lemma}[theorem]{Lemma}
\newtheorem{corollary}[theorem]{Corollary}
\newtheorem{proposition}[theorem]{Proposition}

\theoremstyle{definition}
\newtheorem{definition}[theorem]{Definition}

\newtheorem{remark}[theorem]{Remark}
\numberwithin{equation}{section}
\title[Inertial manifolds for the nonlocal parabolic problem]{Inertial manifolds for the nonlocal parabolic problem}
\author[X. Q. Yang, A. N. Carvalho, and C.Y. Sun ]{Xiaoqing Yang ${}^\dagger$,~Alexandre N. Carvalho ${}^\ddag$,~ and Chunyou Sun${}^{*,\dagger}$}
\thanks{Corresponding author: Chunyou Sun, sunchy@lzu.edu.cn}
\address{${}^\dagger$ School of Mathematics and Statistics, Lanzhou University, Lanzhou, 730000, P.R. China}
\email{xiaoqingyang22@lzu.edu.cn (X. Yang), andcarva@icmc.usp.br (A. N. Carvalho ), sunchy@lzu.edu.cn (C.Sun)}
\address{${}^\ddag$ ${}^{\mathrm{a}}$ Instituto de Ci{\^e}ncias Matem\'aticas e de Computa\c{c}\~ao, Universidade de S\~ao Paulo Campus de S\~ao Carlos, 
Caixa Postal 668, S\~ao Carlos SP, Brazil}

\address{${}^*$ School of Mathematics and Statistics, Donghua University, Shanghai, 201620, P.R. China}

\subjclass[2020]{35B41, 35B42, 35K59}

\keywords{Nonlocal Problem, Inertial manifolds}

\begin{document}
	
\begin{abstract}
This paper provides an abstract framework for studying inertial manifolds associated with a class of nonlocal parabolic problems. In particular, by suitably modifying the nonlocal term outside the absorbing ball and changing the scale of time, we derive a corresponding spectral gap condition. As applications, we establish the existence of inertial manifolds for two classes of two-dimensional modified nonlocal parabolic equations on a square domain, whose diffusion coefficients depend on the $L^2$-norm of the solution and of its gradient, respectively.

\end{abstract}
\maketitle	
\tableofcontents
\section{Introduction}
In the study of the asymptotic behavior of an autonomous dissipative partial differential equation (PDE), the global attractor $\mathcal{A}$, which plays a central role, is a compact invariant subset of the infinite-dimensional phase space that attracts all bounded sets under the action of the semigroup $\{\mathcal{S}(t),t\geq0\}$ as time tends to infinity. 
However, due to the complexity of the infinite dimensional dynamical system $\{\mathcal{S}(t),\mathcal{A}\}$,
it is of great interest to identify problems for which the asymptotic behavior of an infinite-dimensional PDE can be captured by finite-dimensional ordinary differential equations (ODEs). The concept of an inertial manifold, introduced by C. Foias, G. R. Sell, and R. Temam in \cite{FST88}, was developed precisely to achieve this goal. Roughly speaking, an inertial manifold is a finite-dimensional invariant Lipschitz submanifold of the phase space that exponentially attracts all orbits.

Consider the following abstract semilinear parabolic problem in a real Hilbert space $\mathcal{H}$ with norm $|\cdot|_{\mathcal{H}}$,
\begin{align}\label{1.11}
	\partial_t u+A^{1+\beta}u=A^{\beta}F(u),\quad u(0)=u_0,
\end{align}
where $\beta\geq0$, $A: D(A)\subset \mathcal{H}\rightarrow \mathcal{H}$ is a positive self-adjoint operator with compact inverse and $F: \mathcal{H} \rightarrow \mathcal{H}$ is globally bounded with bound $K$ and globally Lipschitz continuous with Lipschitz constant $L$. The most direct way to prove the existence of an inertial manifold of \eqref{1.11} is to verify that the corresponding spectral gap condition holds, that is, there exists $N\in\mathbb{Z}^+$ such that
 \begin{align}\label{1.22}
 \frac{\lambda_{N+1}^{1+\beta}-\lambda_{N}^{1+\beta}}{\lambda_{N+1}^{\beta}+\lambda_{N}^{\beta}}>L,
 \end{align}
 where $\{\lambda_n\}_{n=1}^{\infty}$ are the eigenvalues of the operator $A$ arranged in non-decreasing order,
 see \cite{li2024,Anna2015} for more details. It is well known that \eqref{1.22} is naturally satisfied in many one-dimensional (1D) dissipative equations, such as the reaction-diffusion equation, the Kuramoto-Sivashinsky equation,
see \cite{Hale-ODE,Henry,Zelik} and the references therein. However, for higher dimensions, the spectral gap condition depends on the shape of the domain, see, e.g., \cite{Guo, constantin, Temam}. In particular, J. Mallet-Paret and G. R. Sell developed the spatial averaging principle \cite{MS1988} to verify cone invariance and the squeezing property of trajectories, and thus establish the existence of inertial manifolds for a three-dimensional (3D) scalar reaction-diffusion equation posed on a cubic domain. Since then, several research efforts have been made in this direction, such as \cite{Carvalho2022,Guo2018,Guo2024, Anna2023, Anna2015, Li2020,Zelik}.

However, for nonlocal parabolic equations, the relevant theory has not been extensively explored due to the presence of nonlocal terms, which make the analysis more challenging. In particular, the linearized operator is non-autonomous and its spectrum varies along solution trajectories, so that the classical spectral gap condition cannot be directly applied to verify the existence of an inertial manifold. Recently, by employing a time transformation (one for each solution) that converts the nonlocal term into a nonlinear term, and exploiting the Sobolev 
embedding $H_0^1(0,\pi)\hookrightarrow L^{\infty}(0,\pi),$ X. Yang, A. N. Carvalho and E. M. Moreira \cite{Xiaoqing} established a finite-dimensional reduction for a 1D Kirchhoff-type parabolic problem. In particular, for initial data in $H_0^1(0, \pi)$ they constructed an inertial manifold for a suitable modified problem and showed that the global attractor of the original problem admits a finite-dimensional Lipschitz graph representation. These results naturally raise the question of whether a similar reduction persists in higher dimensions and in a more general abstract framework, and, in particular, what form the corresponding spectral gap condition should take.

Motivated by \cite{li2024,Anna2015,Xiaoqing}, we introduce the following abstract model:
\begin{align}\label{1.1}
\partial_t u+a\bigl(|A^{-\frac{\beta}{2}}u|^2_{\mathcal{H}}\bigr)
A^{1+\beta}u=A^{\beta}F(u),\quad u(0)=u_0,
\end{align}
where $\beta$, $A$ and $F$ are as in \eqref{1.11}, and 
$a:\mathbb{R}^+\to [m,M]\subset (0,+\infty)$ is continuously differentiable and globally Lipschitz with Lipschitz constant $\ell_a$. Compared with the semilinear problem \eqref{1.11}, the constant coefficient of $A^{1+\beta}u$ is replaced by $a\bigl(|A^{-\frac{\beta}{2}}u|^2_{\mathcal{H}}\bigr)$, which depends on the global norm of the solution. Hence, problem \eqref{1.1} can be regarded as a nonlocal counterpart of \eqref{1.11}. If the function $a(s)\equiv 1$, model \eqref{1.1} reduces exactly to \eqref{1.11}. 
In particular, choosing $\beta=1$ and $A=-\Delta$ under periodic or Neumann boundary conditions yields the classical Cahn-Hilliard equation studied in \cite{Anna2015}. On the other hand, when $a(\cdot)$ is nonconstant, \eqref{1.1} describes a genuinely nonlocal dynamics. For example, taking $A=-\Delta$ with homogeneous Dirichlet boundary conditions and $\beta=0$ leads to a nonlocal reaction-diffusion equation. In the one-dimensional case $\Omega=(0,\pi)$ with $\mathcal{H}=H_0^1(0,\pi)$, this equation coincides with the Kirchhoff-type model considered in \cite{Xiaoqing}. Consequently, model~\eqref{1.1} encompasses the equations studied in \cite{li2024,Anna2015,Xiaoqing} as special cases, providing a unified framework for the analysis of inertial manifolds in both local and nonlocal settings.

The present paper continues our previous investigation in \cite{Xiaoqing} and aims to formulate the problem within a unified abstract framework. More precisely, we first consider the existence of inertial manifolds for problem \eqref{1.1} within the framework of the invariant cone method (see, e.g., \cite{MS1988,Robinson,Zelik}). Since the corresponding strong cone condition is not straightforward to verify directly, following the approach in \cite{Xiaoqing}, we suitably modify the nonlocal term outside an absorbing ball and change the scale of time. The modified problem has the same asymptotic dynamics as \eqref{1.1} and is transformed into a semilinear parabolic equation of the form \eqref{1.11}. As a consequence, we obtain the following explicit spectral gap condition:
\begin{align}\label{1.2}
\frac{\lambda_{N+1}^{1+\beta}-\lambda_{N}^{1+\beta}}{\lambda_{N+1}^{\beta}+\lambda_{N}^{\beta}}>\frac{L}{m}+\frac{12K\ell_a\rho\lambda_1^{-\beta/2}}{m^2}
\end{align}
which quantitatively reflects the dependence on both the nonlocal coefficient and the nonlinear term, and ensures the existence of an inertial manifold for the suitably modified version of \eqref{1.1}, namely the cut-off problem \eqref{cut-off problem}, to be presented later. Here, $\rho>0$ denotes the radius of the absorbing ball in $\mathcal{H}^{-\beta}$ for problem \eqref{1.1}. 
In addition, we note that if the function $a(s) \equiv1$, then \eqref{1.2} coincides with
\eqref{1.22}. In this sense, \eqref{1.2} can be viewed as a suitable extension of \eqref{1.22}. 

This paper is organized as follows. In Section 2.1,
we introduce some basic definitions and notation, provide the well-posedness, the smoothing property as well as the existence of a compact absorbing ball for problem \eqref{1.1}.
In Section 2.2, by using the invariant cone method, we establish sufficient conditions for the construction of an inertial manifold for the abstract model \eqref{1.1}. In Section 2.3, following the approach in \cite{Xiaoqing}, we suitably modify the nonlocal term and change the scale of time to transform the modified problem into a semilinear parabolic equation. We then derive the spectral gap condition \eqref{1.2}, which guarantees the existence of an inertial manifold for the suitably modified version of \eqref{1.1}.
In Section 3, as applications of the abstract theory established in the previous section, we use a classical result from number theory, introduced by I. Richards (see Lemma \ref{two square}), to verify the corresponding spectral gap conditions for the suitably modified two-dimensional nonlocal parabolic equations whose diffusion coefficients depend on the $L^2$-norm of the solution and the $L^2$-norm of its gradient, respectively, and thus establish the existence of inertial manifolds. In the latter case, following the cut-off technique in \cite{ANA2018}, we employ an additional cut-off construction to overcome the failure of the embedding $H_0^1(\Omega)\hookrightarrow L^\infty(\Omega)$ in two dimensions.

Throughout this article, for convenience, we use $C(\cdot,\cdot)$ to denote a positive constant that depends on its elements and may change from line to line.
\section{Abstract Nonlocal Parabolic Problems}

In this section, we introduce the fundamental definitions and notation, and present the well-posedness and some dissipative estimates for problem \eqref{1.1}. We then establish an abstract criterion for the existence of inertial manifolds based on the invariant cone method. Finally, by suitably modifying the nonlocal term and changing the scale of time, we derive the spectral gap condition \eqref{1.2}, which provides a verifiable condition for the existence of an inertial manifold for the suitably modified problem.

\subsection {Well-Posedness and the Global Attractor}
\noindent

Let $\mathcal{H}$ be a Hilbert space with the norm $|\cdot|_{\mathcal{H}}$,
$A: D(A)\subset \mathcal{H} \rightarrow \mathcal{H}$ is a linear positive self-adjoint operator with compact inverse.
By the Hilbert-Schmidt theorem, there exists a complete orthonormal basis $\{e_n\}_{n=1}^{\infty}$ of $\mathcal{H}$ consisting of eigenvectors of $A$, associated with positive eigenvalues $\lambda_n > 0$ ordered such that 
\begin{align*}
A e_n=\lambda_n e_n, \quad 0<\lambda_1 \leq \lambda_2 \leq \lambda_3 \leq \cdots.
\end{align*}
Due to the compactness of the inverse operator $A^{-1},$ the eigenvalues satisfy $\lambda_n \rightarrow \infty$ as $n \rightarrow \infty$. In addition, every element $u \in \mathcal{H}$ admits a Fourier series representation:

\begin{align}\label{fourier series}
u=\sum_{n=1}^{\infty} u_n e_n,\quad u_n:=\left(u, e_n\right), \quad|u|_{\mathcal{H}}^2=\sum_{n=1}^{\infty} u_n^2,
\end{align}
where ($u, v$) is the inner product in the space $\mathcal{H}$.
As usual, the Hilbert spaces $\mathcal{H}^s$, $s \in \mathbb{R}^{+}$, are defined as follows:
\begin{align}
\mathcal{H}^s:= D(A^{\frac{s}{2}})=\{u \in \mathcal{H};|u|_{\mathcal{H}^s}^2=\sum_{n=1}^{\infty} \lambda_n^s u_n^2<\infty\} .
\end{align}
For $s < 0$, $\mathcal{H}^s$ denotes the completion of $\mathcal{H}$ with respect to the norm $|\cdot|_{\mathcal{H}^s}$. Next, we define the following orthogonal projectors: 
$$
P_N u:=\sum_{n=1}^N u_n e_n,\quad Q_N:=I-P_N,
$$
and define the subspaces $\mathcal{H}_{+}^s:=P_{N}\mathcal{H}^s$ and $\mathcal{H}_{-}^s:=Q_{N}\mathcal{H}^s$ for $N\in\mathbb{Z}^+,s\in\mathbb{R}$.

We consider the following abstract nonlocal parabolic problem:
\begin{equation}\label{nonlocal problem}
	\left\{\begin{aligned}
		&\partial_t u+a(|u|^2_{\mathcal{H}^{-\beta}})A^{1+\beta}u=A^{\beta}F(u),\ t>0,\\
		&u(0)=u_0,
	\end{aligned}\right.
\end{equation} 
where $\beta\geq0$, $u_0 \in \mathcal{H}^{-\beta}$ is a given initial datum, $a:\mathbb{R^+}\to [m,M]\subset (0,+\infty)$ is a continuously differentiable function and is globally Lipschitz with Lipschitz constant $\ell_a$. The nonlinearity $F: \mathcal{H}\rightarrow\mathcal{H}$ is globally Lipschitz with Lipschitz constant $L$ and globally bounded, i.e.,
\begin{equation}\label{F}
	\begin{aligned}
	|F(u)| _\mathcal{H}&\leq K, \quad u \in \mathcal{H},\\
	|F\left(u_1\right)-F\left(u_2\right)|_\mathcal{H}&\leq L|u_1-u_2|_\mathcal{H}, \quad u_1, u_2 \in \mathcal{H}.
	\end{aligned}
\end{equation}

We say that a function $u(t)$ is a solution to \eqref{nonlocal problem} on an interval $[0, T]$ if $u \in C([0, T] ; \mathcal{H}^{-\beta}) \cap L^2(0, T ; \mathcal{H}^{1})$ 
and \eqref{nonlocal problem} is satisfied in the sense of distributions, i.e., $u(0)=u_0$ and
$$
-\int_0^T\left(u, \phi_t\right) dt+\int_0^T(a(|u|_{\mathcal{H}^{-\beta}}^2)A^{1+\beta}u, \phi) dt=\int_0^T(A^{\beta}F(u), \phi) dt,
$$
for every $\phi \in C^\infty_{c}([0, T];\mathcal{H}^{1+2\beta})$.

Now, we using a standard way to establish the well-posedness of problem \eqref{nonlocal problem}.
\begin{proposition}\label{well poseness}
 Let the nonlinearity $F$, the operator $A$ and the function $a(s)$ satisfy the above assumptions. If $u_0 \in \mathcal{H}^{-\beta}$ with $\beta\geq0,$ then problem \eqref{nonlocal problem} admits a unique solution $u \in C([0, T] ; \mathcal{H}^{-\beta}) \cap L^2(0, T ; \mathcal{H}^{1})$ for all $T>0$. Furthermore, \eqref{nonlocal problem} holds as an equality in $L^2(0, T ; \mathcal{H}^{-1-2\beta})$.
\end{proposition}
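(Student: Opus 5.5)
I will prove existence by a Faedo--Galerkin scheme in the eigenbasis $\{e_n\}$, pass to the limit by compactness, and then prove uniqueness and continuous dependence with a Gronwall argument in $\mathcal{H}^{-\beta}$. Throughout, $\mathcal{H}^{-\beta}$ is the natural energy space: testing the equation with $A^{-\beta}u$ makes the nonlocal coefficient multiply $|u|_{\mathcal{H}^1}^2$.

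\emph{Approximate problems and bounds.} I will look for $u^n=\sum_{k\le n}g_k(t)e_k$ solving
\[
\partial_t u^n+a(|u^n|^2_{\mathcal{H}^{-\beta}})A^{1+\beta}u^n=P_nA^{\beta}F(u^n),\qquad u^n(0)=P_nu_0.
\]
This is a finite system of ODEs with a locally Lipschitz right-hand side, since $a$ is $C^1$ and Lipschitz and $F$ is Lipschitz. Taking the $\mathcal{H}$-inner product with $A^{-\beta}u^n$ and using $a\ge m$ together with \eqref{F}, I will obtain
\[
\tfrac12\tfrac{d}{dt}|u^n|^2_{\mathcal{H}^{-\beta}}+m|u^n|^2_{\mathcal{H}^1}\le |(F(u^n),u^n)|\le K|u^n|_{\mathcal{H}}\le K\lambda_1^{-1/2}|u^n|_{\mathcal{H}^1}.
\]
Young's inequality then gives uniform bounds for $u^n$ in $L^\infty(0,T;\mathcal{H}^{-\beta})\cap L^2(0,T;\mathcal{H}^1)$, so the solutions are global. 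From the equation, $\partial_tu^n$ is bounded in $L^2(0,T;\mathcal{H}^{-1-2\beta})$, because $a\le M$, $A^{1+\beta}:\mathcal{H}^1\to\mathcal{H}^{-1-2\beta}$, and $A^\beta F(u^n)$ is bounded in $\mathcal{H}^{-2\beta}\subset\mathcal{H}^{-1-2\beta}$.

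\emph{Passage to the limit.} This is the main obstacle, because the coefficient $a(|u|^2_{\mathcal{H}^{-\beta}})$ is nonlinear and needs strong convergence. By the Aubin--Lions lemma, using the compact embeddings $\mathcal{H}^1\hookrightarrow\mathcal{H}\hookrightarrow\mathcal{H}^{-1-2\beta}$, I will extract a subsequence with $u^n\to u$ strongly in $L^2(0,T;\mathcal{H})$ and weakly in $L^2(0,T;\mathcal{H}^1)$. Since $\mathcal{H}\hookrightarrow\mathcal{H}^{-\beta}$ is continuous, $|u^n|^2_{\mathcal{H}^{-\beta}}\to|u|^2_{\mathcal{H}^{-\beta}}$ in $L^1(0,T)$, hence a.e. along a further subsequence. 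The coefficient $a$ is bounded, so dominated convergence gives $a(|u^n|^2_{\mathcal{H}^{-\beta}})\to a(|u|^2_{\mathcal{H}^{-\beta}})$ strongly in every $L^p(0,T)$ with $p<\infty$. A strong-times-weak argument therefore handles the term $\int_0^T a(\cdot)(A^{1+\beta}u^n,\phi)\,dt$. The term $F(u^n)\to F(u)$ converges in $L^2(0,T;\mathcal{H})$ by the Lipschitz property. Hence $u$ satisfies the weak formulation, and the equation holds in $L^2(0,T;\mathcal{H}^{-1-2\beta})$. For continuity in time, $u\in L^2(0,T;\mathcal{H}^1)$ and $u_t\in L^2(0,T;\mathcal{H}^{-1-2\beta})$, and $\mathcal{H}^{-\beta}$ is the midpoint of this pair in the Hilbert scale. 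The Lions--Magenes lemma then gives $u\in C([0,T];\mathcal{H}^{-\beta})$ together with the energy identity, and $u(0)=u_0$ follows in the standard way.

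\emph{Uniqueness.} Let $u,v$ be two solutions with $w=u-v$, and write $a_u=a(|u|^2_{\mathcal{H}^{-\beta}})$, $a_v=a(|v|^2_{\mathcal{H}^{-\beta}})$. Subtracting the equations and testing with $A^{-\beta}w$, which is justified by the Lions--Magenes identity, gives
\[
\tfrac12\tfrac{d}{dt}|w|^2_{\mathcal{H}^{-\beta}}+a_u|w|^2_{\mathcal{H}^1}=-(a_u-a_v)(A^{1/2}v,A^{1/2}w)+(F(u)-F(v),w).
\]
To bound the coefficient difference I will use
\[
|a_u-a_v|\le\ell_a\bigl||u|_{\mathcal{H}^{-\beta}}^2-|v|_{\mathcal{H}^{-\beta}}^2\bigr|\le\ell_a(|u|_{\mathcal{H}^{-\beta}}+|v|_{\mathcal{H}^{-\beta}})|w|_{\mathcal{H}^{-\beta}}\le C\,|w|_{\mathcal{H}^{-\beta}}.
\]
The first term on the right is then at most $C|w|_{\mathcal{H}^{-\beta}}|v|_{\mathcal{H}^1}|w|_{\mathcal{H}^1}\le \frac m4|w|^2_{\mathcal{H}^1}+C|v|^2_{\mathcal{H}^1}|w|^2_{\mathcal{H}^{-\beta}}$. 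For the second term, $L|w|^2_{\mathcal{H}}$ is handled by the interpolation $|w|_{\mathcal{H}}\le|w|_{\mathcal{H}^{-\beta}}^{\theta}|w|_{\mathcal{H}^1}^{1-\theta}$ and Young's inequality. The resulting weight $|v|^2_{\mathcal{H}^1}$ lies in $L^1(0,T)$, so Gronwall's inequality gives $w\equiv0$, which proves uniqueness.
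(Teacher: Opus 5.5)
Your proof is correct and follows essentially the same route as the paper. Both arguments use Galerkin approximation in the eigenbasis, the energy estimate obtained by testing with $A^{-\beta}u^n$, Aubin--Lions compactness to identify $F(u)$ and the nonlocal coefficient $a(|u|^2_{\mathcal{H}^{-\beta}})$, and a Gronwall argument in $\mathcal{H}^{-\beta}$ with interpolation for uniqueness. Your use of the Lions--Magenes lemma, with $\mathcal{H}^{-\beta}$ as pivot between $\mathcal{H}^1$ and $\mathcal{H}^{-1-2\beta}$, makes explicit two points the paper only asserts: that $u\in C([0,T];\mathcal{H}^{-\beta})$, and that testing the difference equation with $A^{-\beta}w$ is legitimate.
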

\begin{proof}
	We employ the Galerkin method to prove the well-posedness of the nonlocal problem \eqref{nonlocal problem}. For each integer $n\geq1$, define $V_n:=\operatorname{span}\left\{e_k: 1\leq k\leq n\right\}$.
	Let us consider the Galerkin approximate solution to \eqref{nonlocal problem} as follows:
	$$
	u_n(t,x)=\sum_{1\leq k\leq n} {g}_k(t)e_k(x),
	$$
	which solves the following system of ODEs: 
	\begin{equation}
		\left\{\begin{aligned}
			&\frac{d}{dt}(u_n,{e}_{k})+a(|u_n|^2_{\mathcal{H}^{-\beta}})(A^{1+\beta}u_n, {e}_{k})=(A^{\beta}F(u_n),{e}_{k}),\\
			&(u_n(0),{e}_{k})=(u_0,{e}_{k}),\quad\quad 1\leq k\leq n.
		\end{aligned}\right.
	\end{equation} 
	It follows that
	\begin{equation}
		\left\{\begin{aligned}
			&\frac{d}{dt}g_{k}(t)+a(|u_n|^2_{\mathcal{H}^{-\beta}})\lambda_{k}^{1+\beta}g_{k}(t)=(A^{\beta}F(u_n),{e}_{k}),\\
			&g_{k}(0)=(u_0,{e}_{k}),\quad\quad 1\leq k\leq n,
		\end{aligned}\right.
	\end{equation} 
	which can be rewritten as	\begin{equation}\label{Approximate system}
		\left\{\begin{aligned}
			&\frac{d}{dt}u_n+a(|u_n|^2_{\mathcal{H}^{-\beta}})A^{1+\beta}u_n=P_nA^{\beta}F(u_n),\\
			&u_n(0)=P_nu_0.
		\end{aligned}\right.
	\end{equation} 
According to the existence and uniqueness theorem for ODEs, one deduces that \eqref{Approximate system} admits a unique solution $u_n \in C\left(\left[0, \infty\right) ; \mathcal{H}^{-\beta}\right) \cap C^1\left(\left(0, \infty\right) ; \mathcal{H}^{-\beta}\right)$.
	
	Multiplying \eqref{Approximate system} with $A^{-\beta}u_n$ and using \eqref{F}, we get
	\begin{align*}
		\frac{1}{2} \frac{d}{dt}|u_n|^2_{\mathcal{H}^{-\beta}}+a(|u_n|^2_{\mathcal{H}^{-\beta}})|A^{\frac12}u_n|^2_{\mathcal{H}}=(F(u_n),u_n)\leq|F(u_n)|_{\mathcal{H}}|u_n|_{\mathcal{H}}\leq K|u_n|_{\mathcal{H}}.
	\end{align*}
	Moreover, by Young's inequality, we have
	\begin{align}\label{3.2}
	 \frac{d}{dt}|u_n|^2_{\mathcal{H}^{-\beta}}+m|u_n|^2_{\mathcal{H}^{1}}\leq\frac{K^2}{m\lambda_1}.
	\end{align}
Integrating the above inequality from $0$ to $T$, we have
	\begin{align}\label{3.4}
		|u_n(T)|^2_{\mathcal{H}^{-\beta}}+	\int_{0}^{T}m|u_n(s)|^2_{\mathcal{H}^{1}}ds\leq|u_n(0)|^2_{\mathcal{H}^{-\beta}}+\frac{K^2}{m\lambda_1}T.
	\end{align}
	Therefore, the sequence $\{u_n\}$ is uniformly bounded in $ L^{\infty}(0,T;\mathcal{H}^{-\beta})\cap L^2(0,T;\mathcal{H}^{1})$. 
	It follows from \eqref{F} that the sequence $\{F(u_n)\}$ is uniformly bounded in $L^{2}(0,T;\mathcal{H})$, for all $T>0$. 
	As a result, $\{\frac{d}{dt}u_n\}$ is uniformly bounded in $L^{2}(0,T;\mathcal{H}^{-1-2\beta})$, for all $T>0$.
	Consequently, there exist a subsequence $\{u_n\}$ (still denoted by $\{u_n\}$) and $u \in L^{\infty}(0,T; \mathcal{H}^{-\beta})$ such that
	$$
	\begin{aligned}
		&u_n  \stackrel{*}{\rightharpoonup}u \quad\text {in} \ L^{\infty}(0, T ; \mathcal{H}^{-\beta}), \\
		&u_n {\rightharpoonup}u\quad\text {in}~L^2(0, T ; \mathcal{H}^{1}), \\
		&F\left(u_n\right){\rightharpoonup} \chi \quad\text {in}~L^{2}\left(0, T ; \mathcal{H}\right), \\
		&\frac{d}{dt}u_n{\rightharpoonup}\frac{d}{dt}u \quad \text {in}~ L^2(0, T ; \mathcal{H}^{-1-2\beta}),\\
		&a(|u_n|_{\mathcal{H}^{-\beta}}^2) \stackrel{*}{\rightharpoonup}b\quad \text { in}~L^{\infty}(0, T),
	\end{aligned}
	$$
	where $\rightharpoonup(\stackrel{*}{\rightharpoonup})$ stands for the weak (weak star) convergence. 
	 According to the Aubin-Lions compactness theorem \cite{Chueshov}, we infer that 
	\begin{align*}
	u_n \rightarrow u \quad\text{in}\ L^2(0,T; \mathcal{H}).
	\end{align*}
	It follows from \eqref{F} that $\chi=F(u)$. Moreover, by the continuity of the function $a(s)$, we obtain that $b=a(|u|^2_{\mathcal{H}^{-\beta}})$ and
	\begin{align*}
	a(|u_n|^2_{\mathcal{H}^{-\beta}})\rightarrow a(|u|^2_{\mathcal{H}^{-\beta}})\quad \text{in} \ L^2(0, T).
	\end{align*}
	Thus, we have shown that all terms in the first equation of \eqref{Approximate system} converge weakly in $L^2(0, T ; \mathcal{H}^{-1-2\beta}).$ In addition, we see that $u\in C([0,T];\mathcal{H}^{-\beta})\cap L^2(0, T ; \mathcal{H}^{1})$.
	
	Next, we intend to show that $u(0)=u_0$. In fact, choosing a test function $\phi \in C^1([0, T];\mathcal{H}^{1+2\beta})$ with $\phi(T)=0$, then we get 
	\begin{align}\label{2.11}
		\langle \partial_tu,\phi\rangle+\langle a(|u|^2_{{\mathcal{H}}^{-\beta}})A^{1+\beta}u ,\phi\rangle=\langle A^{\beta}F(u),\phi\rangle,
	\end{align}
	where $\langle\cdot,\cdot\rangle$ denotes the scalar product for the duality $\mathcal{H}^{-1-2\beta},\mathcal{H}^{1+2\beta}$.
Integrating \eqref{2.11} from 0 to $T$ and using integration by parts for the first term, we have
	\begin{align*}
	\int_0^T-\langle u, \phi_{t}\rangle+\langle a(|u|^2_{{\mathcal{H}}^{-\beta}})A^{1+\beta}u ,\phi\rangle d s=\int_0^T\langle A^{\beta}F(u),\phi\rangle d s+\langle u(0), \phi(0)\rangle.
	\end{align*}
If we perform the same procedure on the Galerkin approximate equation \eqref{Approximate system}, we obtain
	\begin{align*}
	\int_0^T-\langle u_n, \phi_{t}\rangle+\langle a(|u_n|^2_{{\mathcal{H}}^{-\beta}})A^{1+\beta}u_n ,\phi\rangle d s=\int_0^T\langle P_nA^{\beta}F(u_n),\phi\rangle d s+\langle u_n(0), \phi(0)\rangle.
\end{align*}
Therefore, we can take the limits of all these terms in the above equality. Since $u_n(0)=P_n(u_0) \rightarrow u_0$ in $\mathcal{H}^{-\beta}$ as $n\rightarrow\infty$ and $\phi(0)$ is arbitrary, it follows that $u(0)=u_0$, as required.

It remains to show that the solution to \eqref{nonlocal problem} is unique. Assume that $u$ and $v$ are two solutions of \eqref{nonlocal problem} with the same initial condition $u_0$, set $w=u-v$, which satisfies 
\begin{equation}\label{Difference of solution}
	\left\{\begin{aligned}
		& w_t+a(|u|^2_{{\mathcal{H}}^{-\beta}})A^{1+\beta}u-a(|v|^2_{\mathcal{H}^{-\beta}})A^{1+\beta} v=A^{\beta}(F(u)-F(v)),\\
		& w(0)=0.
	\end{aligned}\right. 
\end{equation}
Multiplying \eqref{Difference of solution} with $A^{-\beta}w$, we get
\begin{align*}
	&\frac{1}{2} \frac{d}{d t}|w|^2_{\mathcal{H}^{-\beta}}+a\left(|u(t)|^2_{{\mathcal{H}}^{-\beta}}\right)(Aw,w)\\
	&=(F(u)-F(v), w)+\left(a\left(|v(t)|^2_{{\mathcal{H}}^{-\beta}}\right)-a\left(|u(t)|^2_{{\mathcal{H}}^{-\beta}}\right)\right)(Av,w)\\
	&\leq L|w|_{\mathcal{H}}^2+\ell_a(|u|_{{\mathcal{H}}^{-\beta}}+|v|_{{\mathcal{H}}^{-\beta}})|v|_{\mathcal{H}^{1}}|w|_{{\mathcal{H}}^{1}}|w|_{{\mathcal{H}}^{-\beta}}\\
	&\leq\frac{m}{4}|w|_{{\mathcal{H}}^{1}}^2+C(m,L)|w|_{{\mathcal{H}}^{-\beta}}^2+\frac{m}{4}|w|_{{\mathcal{H}}^{1}}^2+\frac{2}{m}\ell_a^2(|u|_{{\mathcal{H}}^{-\beta}}^2+|v|_{{\mathcal{H}}^{-\beta}}^2)|v|_{\mathcal{H}^{1}}^2|w|_{{\mathcal{H}}^{-\beta}}^2,
\end{align*}
where we have used the following interpolation inequality:
\begin{align*}
  |w|_{\mathcal{H}}^2\leq|w|_{{\mathcal{H}}^{1}}^{\frac{2\beta}{1+\beta}} |w|_{{\mathcal{H}}^{-\beta}}^{\frac{2}{1+\beta}}.
  \end{align*}
Therefore, we have 
\begin{align}\label{lipschitz continuous}
	\frac{d}{d t}|w|^2_{\mathcal{H}^{-\beta}}+m|w|^2_{\mathcal{H}^{1}}
	&\leq(C(m,L)+\frac{4}{m}\ell_a^2(|u|_{{\mathcal{H}}^{-\beta}}^2+|v|_{{\mathcal{H}}^{-\beta}}^2)|v|_{\mathcal{H}^{1}}^2)|w|_{{\mathcal{H}}^{-\beta}}^2,
\end{align}
by Gronwall's inequality and \eqref{3.4}, for any $T>0,$ we obtain 
\begin{align}\label{continuous}
	|w(t)|^2_{{\mathcal{H}}^{-\beta}} \leq 
	e^{C(m,\ell_a,\lambda_1,L,K,T,|u_0|_{{\mathcal{H}}^{-\beta}})t}|w(0)|^2_{{\mathcal{H}^{-\beta}}},\quad t\in[0,T].
\end{align}
Therefore, we obtain the desired result.
\end{proof}
\begin{remark}\label{remark2.2}
{
It should be noted that the solution $u(t,u_0)$ of \eqref{nonlocal problem} admits the following variation of constants formula:
\begin{align}\label{formula1}
u(t,u_0)=U_{u_0}(t)u_0+\int_0^tU_{u(\theta,u_0)}(t-\theta)A^\beta F(u(\theta,u_0))\,d\theta,
\end{align}
where 
$$
U_{u_0}(t)=e^{-A^{1+\beta}\int_0^ta(|u(r,u_0)|_{\mathcal{H}^{-\beta}}^2)dr},\quad 
U_{u(\theta)}(t-\theta)=e^{-A^{1+\beta}\int_0^{t-\theta}a(|u(r, u(\theta))|_{\mathcal{H}^{-\beta}}^2)dr}.
$$ 
See, e.g., \cite{Yagi,Xiaoqing} for related arguments.
}
\end{remark}

Let $u(t)$ denote the solution of \eqref{nonlocal problem}, the associated semigroup $S_a(t):\mathcal{H}^{-\beta}\to\mathcal{H}^{-\beta}$ is defined by $S_a(t)u_0:=u(t)$ for $t\geq0.$ 
Moreover, we see that this semigroup possesses a compact absorbing ball in the phase space $\mathcal{H}^{-\beta}$, that is, the following result holds.

\begin{proposition}\label{global attractor}
Suppose that the assumptions of Proposition \ref{well poseness} are satisfied. Then there exists a constant $\rho_{\beta}>0$ such that the semigroup $S_a(t)$ associated with \eqref{nonlocal problem} admits an absorbing ball $\mathcal{B}_{\rho_\beta}:=\{u\in \mathcal{H}^{1-\beta}; |u|_{\mathcal{H}^{1-\beta}}\leq\rho_{\beta}\}$
in the phase space $\mathcal{H}^{-\beta}$.
Moreover, $S_a(t)$ has a global attractor $\mathcal{A}\subset {\mathcal{H}^{1-\beta}}$.
\end{proposition}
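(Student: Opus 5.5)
We proceed in two stages: first an absorbing ball in the phase space $\mathcal{H}^{-\beta}$, then a smoothing estimate that pushes this ball into a bounded set of $\mathcal{H}^{1-\beta}$. Since $\mathcal{H}^{1-\beta}\hookrightarrow\mathcal{H}^{-\beta}$ compactly (the inverse of $A$ is compact), this bounded set is a compact absorbing set. The standard theorem on dissipative semigroups then gives the global attractor $\mathcal{A}$; it is contained in the absorbing set, hence in $\mathcal{H}^{1-\beta}$. Continuity of $S_a(t)$ on $\mathcal{H}^{-\beta}$ is already available from \eqref{continuous}.

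\emph{Step 1 (dissipation in $\mathcal{H}^{-\beta}$).} Pass to the limit in \eqref{3.2} (or redo the computation for the solution itself, which is justified by the regularity in Proposition~\ref{well poseness}). Using $|u|^2_{\mathcal{H}^1}\ge \lambda_1^{1+\beta}|u|^2_{\mathcal{H}^{-\beta}}$ this gives
\begin{align*}
\frac{d}{dt}|u|^2_{\mathcal{H}^{-\beta}}+m\lambda_1^{1+\beta}|u|^2_{\mathcal{H}^{-\beta}}\le \frac{K^2}{m\lambda_1}.
\end{align*}
Gronwall's inequality yields $|u(t)|^2_{\mathcal{H}^{-\beta}}\le e^{-m\lambda_1^{1+\beta}t}|u_0|^2_{\mathcal{H}^{-\beta}}+R_0^2$ with $R_0^2=K^2/(m^2\lambda_1^{2+\beta})$. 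Hence the ball of radius $2R_0$ in $\mathcal{H}^{-\beta}$ is absorbing, with an entrance time depending only on $|u_0|_{\mathcal{H}^{-\beta}}$. Integrating the same inequality over $[t,t+1]$ also bounds $\int_t^{t+1}|u|^2_{\mathcal{H}^1}\,ds$ uniformly once the orbit has entered this ball.

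\emph{Step 2 (smoothing into $\mathcal{H}^{1-\beta}$).} Formally, take the inner product of \eqref{nonlocal problem} with $A^{1-\beta}u$; rigorously, do this at the Galerkin level and pass to the limit. This gives
\begin{align*}
\tfrac12\frac{d}{dt}|u|^2_{\mathcal{H}^{1-\beta}}+a(\cdot)|u|^2_{\mathcal{H}^{2}}=(F(u),Au)\le K\,|u|_{\mathcal{H}^2},
\end{align*}
and after Young's inequality $\frac{d}{dt}|u|^2_{\mathcal{H}^{1-\beta}}+m|u|^2_{\mathcal{H}^2}\le K^2/m$. Since $|u|_{\mathcal{H}^2}\ge\lambda_1^{(1+\beta)/2}|u|_{\mathcal{H}^{1-\beta}}$, this again gives exponential decay up to a constant. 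To start the estimate from data that are merely in $\mathcal{H}^{-\beta}$, we apply the uniform Gronwall lemma. The integrand is controlled because $\int_t^{t+1}|u|^2_{\mathcal{H}^{1-\beta}}\,ds\le C\int_t^{t+1}|u|^2_{\mathcal{H}^1}\,ds$ (as $1-\beta\le1$ and $\lambda_n\ge\lambda_1$), and the right-hand side is bounded by Step 1. This produces $\rho_\beta$, depending only on $K,m,\lambda_1,\beta$, such that $|u(t)|_{\mathcal{H}^{1-\beta}}\le\rho_\beta$ for all $t\ge T(|u_0|_{\mathcal{H}^{-\beta}})+1$. Equivalently, one can multiply by $t$ and use the bound on $\int_0^t|u|^2_{\mathcal{H}^1}$ from \eqref{3.4}.

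\emph{Main obstacle.} The delicate point is justifying the $\mathcal{H}^{1-\beta}$ energy identity for a solution that only lies in $C([0,T];\mathcal{H}^{-\beta})\cap L^2(0,T;\mathcal{H}^1)$. We will handle it at the Galerkin level. For $u_n$ the estimates above hold with the same constants, since $a(\cdot)\ge m$ uniformly and $|P_nF|\le K$. The uniform Gronwall bound therefore holds for every $u_n$, and it transfers to $u$ by weak lower semicontinuity of the norm together with the convergence $u_n\to u$ established in Proposition~\ref{well poseness}. The nonlocal coefficient causes no difficulty here, because only its lower bound $m$ enters.
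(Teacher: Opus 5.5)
Your proposal is correct and follows essentially the same route as the paper: the $A^{-\beta}u$ energy estimate with Gronwall gives an absorbing ball in $\mathcal{H}^{-\beta}$ and a bound on $\int_t^{t+1}|u|^2_{\mathcal{H}^1}$, and then the $A^{1-\beta}u$ estimate combined with the uniform Gronwall lemma pushes it into a bounded set of $\mathcal{H}^{1-\beta}$. You additionally spell out three points the paper leaves implicit: the Galerkin justification, the factor $\lambda_1^{-\beta}$ in passing from $\mathcal{H}^1$ to $\mathcal{H}^{1-\beta}$, and the compact-absorbing-set argument that yields the attractor $\mathcal{A}\subset\mathcal{H}^{1-\beta}$.
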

\begin{proof}
	Here we only give a formal derivation, which can be justified by the Galerkin method.
 Taking the inner scalar product of \eqref{nonlocal problem} with $A^{-\beta}u$, we get
	\begin{align*}
		\frac{d}{dt}|u|^2_{\mathcal{H}^{-\beta}}+2a(|u|^2_{\mathcal{H}^{-\beta}})|u|^2_{\mathcal{H}^{1}}
		\leq 2K|u|_{\mathcal{H}}\leq2K\lambda_1^{-\frac12}|u|_{\mathcal{H}^1}\leq m|u|_{\mathcal{H}^1}^2+\frac{K^2}{m\lambda_1},
	\end{align*}
	which implies that
		\begin{align}\label{2.13}
		\frac{d}{dt}|u|^2_{\mathcal{H}^{-\beta}}+m|u|^2_{\mathcal{H}^{1}}
		\leq \frac{K^2}{m\lambda_1}.
	\end{align}
	Hence,
	\begin{align*}
		\frac{d}{dt}|u|^2_{\mathcal{H}^{-\beta}}+m\lambda_1^{1+\beta}|u|^2_{\mathcal{H}^{-\beta}}
		\leq \frac{K^2}{m\lambda_1}.
	\end{align*}
Applying Gronwall's inequality, we obtain
	\begin{align}\label{solution estimate}
		|u(t)|^2_{\mathcal{H}^{-\beta}}
	\leq e^{-m\lambda_1^{1+\beta}t} |u_0|^2_{\mathcal{H}^{-\beta}}+\frac{K^2}{m^2\lambda_1^{2+\beta}}(1-e^{-m\lambda_1^{1+\beta}t}).
	\end{align}
Thus, there exists $t_0>0$ such that
\begin{align*}
|u(t)|^2_{\mathcal{H}^{-\beta}}\leq2\frac{K^2}{m^2\lambda_1^{2+\beta}}=:\rho_0^2, \quad\text{for}\ t\geq t_0.
\end{align*}
Furthermore, integrating \eqref{2.13} from $t$ to $t+1$, we infer that 
	\begin{align}\label{2.14}
		\int_t^{t+1}|u(\tau)|^2_{\mathcal{H}^{1}}d\tau&\leq \frac{1}{m}|u(t)|^2_{\mathcal{H}^{-\beta}}+\frac{K^2}{m^2\lambda_1}\\ \nonumber
		&\leq\frac{1}{m}\rho_0^2+\frac{K^2}{m^2\lambda_1}   
   =:\rho_1^2, \quad\text{for}\ t\geq t_0.
	\end{align}
Taking the inner scalar product of \eqref{nonlocal problem} with $A^{1-\beta}u$, we get
	\begin{align*}
	\frac{d}{dt}|u|^2_{\mathcal{H}^{1-\beta}}+2m|u|^2_{\mathcal{H}^{2}}
	\leq2K|u|_{\mathcal{H}^{2}}\leq m|u|^2_{\mathcal{H}^{2}}+\frac{K^2}{m}.
	\end{align*}
This means that
	\begin{align}\label{2.166}
	\frac{d}{dt}|u|^2_{\mathcal{H}^{1-\beta}}+m|u|^2_{\mathcal{H}^{2}}
	\leq \frac{K^2}{m}.
\end{align}
Applying the uniform Gronwall inequality to \eqref{2.166}, it follows from \eqref{2.14} that
\begin{align*}
	|u(t)|^2_{\mathcal{H}^{1-\beta}}\leq\rho_1^2+\frac{K^2}{m}=:\rho_2^2, \quad\text{for}\ t\geq t_0+1.
\end{align*}
Furthermore, integrating \eqref{2.166} from $t$ to $t+1$, we infer that 
	\begin{align}
	\int_t^{t+1}|u(\tau)|^2_{\mathcal{H}^{2}}d\tau\leq \frac{1}{m}|u(t)|^2_{\mathcal{H}^{1-\beta}}+\frac{K^2}{m^2}\leq\frac{1}{m}\rho_2^2+\frac{K^2}{m^2}=:\rho_3^2, \quad\text{for}\ t\geq t_0+1.
\end{align}
This completes the proof.
\end{proof}

As we will see soon, the solution $u(t)$ to \eqref{nonlocal problem}
exhibits an instantaneous smoothing effect from $\mathcal{H}^{-\beta}$ to $\mathcal{H}^{\gamma}$ with $-\beta\leq\gamma<2$. To be precise, we have the following proposition.

\begin{proposition}\label{prop2.5}
Let the assumptions of Proposition \ref{well poseness} be satisfied. Then, for any $u_0\in\mathcal{H}^{-\beta}$, we have
$u(t)\in\mathcal{H}^{\gamma}$ for all $t>0$ and for every $\gamma\in[-\beta, 2)$. Moreover, 
there exist constants $C_{\gamma}>0$ such that
\begin{align}\label{smoothness1}
|u(t)|_{\mathcal{H}^{\gamma}}\leq C(m)t^{-\frac{(\gamma+\beta)}{2(1+\beta)}}|u_0|_{\mathcal{H}^{-\beta}}+ C_{\gamma}, \quad -\beta\leq\gamma<2,\ t\in(0,1].
\end{align}
\end{proposition}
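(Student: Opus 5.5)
The plan is to treat the linear part and the forcing part of the solution separately, using the variation of constants formula \eqref{formula1} from Remark \ref{remark2.2}. Set
\[
\tau(t):=\int_0^t a(|u(r)|^2_{\mathcal{H}^{-\beta}})\,dr .
\]
Since $m\le a\le M$, we have $mt\le\tau(t)\le Mt$. The time-dependent coefficient then only rescales time inside the analytic semigroup $e^{-sA^{1+\beta}}$.

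\emph{Linear part.} Write $u_0=\sum u_ne_n$. Then
\[
|e^{-\tau(t)A^{1+\beta}}u_0|^2_{\mathcal{H}^\gamma}=\sum_n \lambda_n^{\gamma+\beta}e^{-2\tau(t)\lambda_n^{1+\beta}}\lambda_n^{-\beta}u_n^2 .
\]
Use the elementary bound $\sup_{x>0}x^{\theta}e^{-2sx}\le C_\theta s^{-\theta}$ with $x=\lambda_n^{1+\beta}$ and $\theta=\frac{\gamma+\beta}{1+\beta}\ge0$. This gives
\[
|U_{u_0}(t)u_0|_{\mathcal{H}^\gamma}\le C\,\tau(t)^{-\frac{\gamma+\beta}{2(1+\beta)}}|u_0|_{\mathcal{H}^{-\beta}}\le C(m)\,t^{-\frac{\gamma+\beta}{2(1+\beta)}}|u_0|_{\mathcal{H}^{-\beta}},
\]
which is the first term of \eqref{smoothness1}.

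\emph{Forcing part.} In the integral in \eqref{formula1}, the propagator from $\theta$ to $t$ is $e^{-A^{1+\beta}(\tau(t)-\tau(\theta))}$, after rewriting the inner integral in terms of the original trajectory by uniqueness (Proposition \ref{well poseness}), so that $u(r,u(\theta))=u(r+\theta)$. Moreover $\tau(t)-\tau(\theta)\ge m(t-\theta)$, and $|F|_{\mathcal{H}}\le K$. The same spectral estimate, now applied to $A^{\beta}F$ measured in $\mathcal{H}^\gamma$, gives
\[
|A^\beta e^{-sA^{1+\beta}}F|_{\mathcal{H}^\gamma}=\Big(\sum_n\lambda_n^{\gamma+2\beta}e^{-2s\lambda_n^{1+\beta}}F_n^2\Big)^{1/2}\le C\, s^{-\frac{\gamma+2\beta}{2(1+\beta)}}K .
\]
This requires $\gamma+2\beta\ge0$, which holds since $\gamma\ge-\beta$. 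The exponent satisfies $\frac{\gamma+2\beta}{2(1+\beta)}<1$ precisely when $\gamma<2$, so the singularity is integrable. Integrating over $\theta\in(0,t)$ with $t\le1$ then yields
\[
\Big|\int_0^t\cdots\Big|_{\mathcal{H}^\gamma}\le C(m,K,\beta,\gamma)\,t^{1-\frac{\gamma+2\beta}{2(1+\beta)}}\le C_\gamma .
\]
This is exactly where the restriction $\gamma<2$ enters.

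\emph{Justification.} The main obstacle is making this rigorous, since \eqref{formula1} was only cited. I would first verify the representation at the Galerkin level. For \eqref{Approximate system}, the scalar coefficient $a(|u_n|^2_{\mathcal{H}^{-\beta}})$ is a known continuous function of $t$, so each mode solves a linear scalar ODE and the Duhamel formula is exact. The bounds above are then uniform in $n$, because they depend only on $m$, $K$ and $|P_nu_0|_{\mathcal{H}^{-\beta}}\le|u_0|_{\mathcal{H}^{-\beta}}$. Passing to the limit by weak lower semicontinuity of the $\mathcal{H}^\gamma$ norm along the convergent subsequence from Proposition \ref{well poseness}, where $u_n(t)\rightharpoonup u(t)$ for each $t$, transfers the estimate to $u$. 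This also shows $u(t)\in\mathcal{H}^\gamma$ for all $t>0$.

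If the $\mathcal{H}$-valued bound on $F$ were insufficient at the endpoint $\gamma=-\beta$, that case is trivial anyway, since it is just \eqref{solution estimate}.
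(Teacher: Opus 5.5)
Your proposal is correct and follows essentially the same route as the paper. You split $u(t)$ via the variation of constants formula \eqref{formula1}, bound the linear part by the smoothing estimate $C(m)t^{-\frac{\gamma+\beta}{2(1+\beta)}}|u_0|_{\mathcal{H}^{-\beta}}$, and bound the forcing part by the integrable singularity $(t-\theta)^{-\frac{\gamma+2\beta}{2(1+\beta)}}$, which is exactly where $\gamma<2$ enters. Your extra steps do not change the argument: you identify the propagator as $e^{-A^{1+\beta}(\tau(t)-\tau(\theta))}$ via uniqueness, and you derive the Duhamel formula exactly at the Galerkin level before passing to the limit. These add justification that the paper leaves to the citation in Remark \ref{remark2.2}.
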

\begin{proof}
Indeed, we observe that there exists a constant $C(m)>0$ such that
\begin{align}\label{22.25}
|U_{u_0}(t)|_{\mathcal{L}(\mathcal{H}^{-s},\mathcal{H}^{\gamma})}\leq C(m)t^{-\frac{(\gamma+s)}{2(1+\beta)}},\quad\text{for any } s,\gamma\geq0 \text{ and } t>0.
\end{align}
For any $-\beta\leq\gamma<2$, from \eqref{formula1} and \eqref{22.25}, we deduce that
\begin{align}\label{semigroup es}
|u(t)|_{\mathcal{H}^{\gamma}}&\leq|U_{u_0}(t)u_0|_{\mathcal{H}^{\gamma}}+\int_{0}^{t}|U_{u(\theta)}(t-\theta)A^{\beta}F(u(\theta))|_{\mathcal{H}^{\gamma}}d\theta \nonumber\\
&\leq C(m)t^{-\frac{(\gamma+\beta)}{2(1+\beta)}}|u_0|_{\mathcal{H}^{-\beta}}+\int_0^t|U_{u(\theta)}(t-\theta)|_{\mathcal{L}(\mathcal{H}^{-2\beta},\mathcal{H}^{\gamma})}|A^{\beta}|_{\mathcal{L}(\mathcal{H},\mathcal{H}^{-2\beta})}|F(u(\theta))|_{\mathcal{H}}d\theta \nonumber\\
&\leq C(m)t^{-\frac{(\gamma+\beta)}{2(1+\beta)}}|u_0|_{\mathcal{H}^{-\beta}}
+C(m,K,|A^{\beta}|_{\mathcal{L}(\mathcal{H},\mathcal{H}^{-2\beta})})\int_0^t
(t-\theta)^{-\frac{(\gamma+2\beta)}{2(1+\beta)}}d\theta\\
&\leq C(m)t^{-\frac{(\gamma+\beta)}{2(1+\beta)}}|u_0|_{\mathcal{H}^{-\beta}}
+C(m,\gamma, \beta, K,|A^{\beta}|_{\mathcal{L}(\mathcal{H},\mathcal{H}^{-2\beta})})t^{\frac{2-\gamma}{2(1+\beta)}} \nonumber\\
&\leq C(m)t^{-\frac{(\gamma+\beta)}{2(1+\beta)}}|u_0|_{\mathcal{H}^{-\beta}}+C_{\gamma}, \quad\text{for }t\in(0,1].\nonumber
\end{align}
\end{proof}

By virtue of Propositions \ref{global attractor} and \ref{prop2.5}, we obtain the following dissipative estimates.
\begin{corollary}\label{corollary2.6}
Let the assumptions of Proposition \ref{prop2.5} be satisfied. Then, there exist constants $C_1,C_2,R_1>0,$ independent of $N,$ such that
\begin{align}\label{Compact dissipative}
|u(t)|_{\mathcal{H}}\leq C_1e^{-\frac m2\lambda_1^{1+\beta}t}|u_0|_{\mathcal{H}^{-\beta}}+C_2,\quad\text{for }\ t\geq1.
\end{align}
Moreover, if $u_0\in\mathcal{H},$ for every $N\in \mathbb{N}$, we have
\begin{align}\label{dissipative in H}
|Q_Nu(t)|_{\mathcal{H}}\leq C_1e^{-\frac m2\lambda_{1}^{1+\beta}t}|Q_Nu_0|_{\mathcal{H}}+R_1,\quad \text{for } t>0.
\end{align}
\end{corollary}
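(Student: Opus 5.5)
For the first estimate \eqref{Compact dissipative} I will combine the smoothing estimate of Proposition \ref{prop2.5} with the exponential decay \eqref{solution estimate}. For $t\geq1$ write $t=(t-1)+1$ and use the semigroup property $u(t)=S_a(1)u(t-1)$. Applying \eqref{smoothness1} with $\gamma=0$ and time $1$ to the initial datum $u(t-1)$ gives
\begin{align*}
|u(t)|_{\mathcal{H}}\leq C(m)|u(t-1)|_{\mathcal{H}^{-\beta}}+C_0 .
\end{align*}
Taking square roots in \eqref{solution estimate} gives
\begin{align*}
|u(t-1)|_{\mathcal{H}^{-\beta}}\leq e^{-\frac m2\lambda_1^{1+\beta}(t-1)}|u_0|_{\mathcal{H}^{-\beta}}+\frac{K}{m\lambda_1^{1+\beta/2}}.
\end{align*}
Substituting, we obtain \eqref{Compact dissipative} with
\begin{align*}
C_1=C(m)e^{\frac m2\lambda_1^{1+\beta}},\qquad C_2=C_0+C(m)K m^{-1}\lambda_1^{-1-\beta/2}.
\end{align*}
None of these constants involves $N$.

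For \eqref{dissipative in H} I will work directly with the high modes. I apply $Q_N$ to \eqref{nonlocal problem}; since $Q_N$ commutes with $A$, this gives
\begin{align*}
\partial_tQ_Nu+a(|u|^2_{\mathcal{H}^{-\beta}})A^{1+\beta}Q_Nu=A^\beta Q_NF(u).
\end{align*}
I will use the variation of constants representation of Remark \ref{remark2.2}, restricted to $Q_N\mathcal{H}$:
\begin{align*}
Q_Nu(t)=U_{u_0}(t)Q_Nu_0+\int_0^t U_{u(\theta)}(t-\theta)A^\beta Q_NF(u(\theta))\,d\theta .
\end{align*}
The key observation is that the coefficient $a\geq m$ sits in the exponent. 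Hence on $Q_N\mathcal{H}$ we have, for every $\gamma\geq 0$,
\begin{align*}
|U_{u_0}(t)|_{\mathcal{L}(\mathcal{H},\mathcal{H})}\leq e^{-m\lambda_{N+1}^{1+\beta}t}\leq e^{-m\lambda_1^{1+\beta}t},\qquad
|U(t)A^\beta|_{\mathcal{L}(\mathcal{H})}\leq \sup_{\lambda\geq\lambda_1}\lambda^\beta e^{-m\lambda^{1+\beta}t}.
\end{align*}
This handles the linear term and produces $C_1\geq1$ times the decaying factor.

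For the integral term I split the supremum $\sup_\lambda \lambda^\beta e^{-m\lambda^{1+\beta}s}$ into two pieces. The first piece is bounded by $C(m)s^{-\beta/(1+\beta)}e^{-\frac m2\lambda_1^{1+\beta}s}$. To see this, write $e^{-m\lambda^{1+\beta}s}=e^{-\frac m2\lambda^{1+\beta}s}e^{-\frac m2\lambda^{1+\beta}s}$, bound the first factor by $e^{-\frac m2\lambda_1^{1+\beta}s}$, and bound $\lambda^\beta e^{-\frac m2\lambda^{1+\beta}s}$ by $C s^{-\beta/(1+\beta)}$. Since $\beta/(1+\beta)<1$, the function $s^{-\beta/(1+\beta)}e^{-\frac m2\lambda_1^{1+\beta}s}$ is integrable on $(0,\infty)$. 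Therefore
\begin{align*}
\int_0^t|U(t-\theta)A^\beta Q_NF|\,d\theta\leq K\int_0^\infty C(m)s^{-\frac{\beta}{1+\beta}}e^{-\frac m2\lambda_1^{1+\beta}s}ds=:R_1 ,
\end{align*}
which is independent of $N$ and $t$. This yields \eqref{dissipative in H}.

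The main obstacle is that the evolution operator $U_{u(\theta)}(t-\theta)$ is not a fixed semigroup: its exponent is $\int a(\cdot)\,dr$ along the trajectory. This has to be justified, either through Remark \ref{remark2.2} or through the direct computation $e^{-A^{1+\beta}\int_\theta^t a}$ obtained from the rescaled time $\tau(t)=\int_0^t a(|u|^2_{\mathcal{H}^{-\beta}})dr$. In that rescaled time the equation becomes autonomous-linear, and the only property needed is $\int_\theta^t a\geq m(t-\theta)$. I would present the estimate for $\theta\mapsto U$ via this time change to avoid ambiguity.
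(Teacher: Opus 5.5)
Your proof is correct. The first estimate is the paper's argument: smoothing \eqref{smoothness1} at time $1$ applied to $u(t-1)$, combined with the square root of \eqref{solution estimate}.

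For \eqref{dissipative in H} you organize the argument differently. The paper splits into two ranges and then glues them:
\begin{itemize}
\item for $t\in(0,1]$ it uses the short-time bound \eqref{short};
\item for $t\geq1$ it repeats the smoothing-plus-dissipation argument of the first part for $Q_Nu$ to obtain \eqref{2.331}.
\end{itemize}
You instead estimate the $Q_N$-projected variation of constants formula once, for all $t\geq0$. You use only $\int_\theta^t a\geq m(t-\theta)$ and the integrable kernel bound $\sup_{\lambda\geq\lambda_1}\lambda^\beta e^{-m\lambda^{1+\beta}s}\leq C(m)s^{-\beta/(1+\beta)}e^{-\frac m2\lambda_1^{1+\beta}s}$.

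What your route buys:
\begin{itemize}
\item it avoids the case split;
\item it gives coefficient $1$ in front of $|Q_Nu_0|_{\mathcal H}$, and in fact the sharper factor $e^{-m\lambda_{N+1}^{1+\beta}t}$;
\item it makes the $N$-independence of $R_1$ transparent.
\end{itemize}
Keeping $\lambda\geq\lambda_{N+1}$ in the supremum would even give $R_1\leq C(m,\beta)K\lambda_{N+1}^{-1}$. The paper's route, in exchange, reuses Propositions \ref{global attractor} and \ref{prop2.5} rather than a new kernel computation.

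Two small points.
\begin{itemize}
\item The justification you worry about needs no time change. Along a fixed trajectory, $\alpha(t):=a(|u(t)|^2_{\mathcal H^{-\beta}})$ is a known continuous scalar function. Each Fourier coefficient then solves $\dot u_n+\alpha(t)\lambda_n^{1+\beta}u_n=\lambda_n^{\beta}(F(u(t)),e_n)$, and the integrating factor gives \eqref{formula1} mode by mode.
\item For a single $C_1$ to serve both inequalities, take $C_1:=\max\{1,\,C(m)e^{\frac m2\lambda_1^{1+\beta}}\}$.
\end{itemize}
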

\begin{proof}
It follows from \eqref{solution estimate} and \eqref{smoothness1} that
\begin{align*}
|u(t)|_{\mathcal{H}}&\leq C(m)|u(t-1)|_{\mathcal{H}^{-\beta}}+C_0\\
&\leq C(m)\left(e^{-\frac{m}{2}\lambda_1^{1+\beta}(t-1)} |u_0|_{\mathcal{H}^{-\beta}}+C(m,\lambda_1,\beta,K)\right)+C_0\\
&\leq C(m,\lambda_1,\beta)e^{-\frac m2\lambda_1^{1+\beta}t}|u_0|_{\mathcal{H}^{-\beta}}+C(m,\lambda_1,\beta,K,C_0),\quad\text{for }\ t\geq1.
\end{align*}
Hence, \eqref{Compact dissipative} holds.
In view of \eqref{formula1}, we see that
$$
Q_Nu(t)=U_{u_0}(t)Q_Nu_0+\int_{0}^{t}U_{u(\theta)}(t-\theta)
Q_NA^{\beta}F(u(\theta))d\theta.
$$
In a similar manner to \eqref{semigroup es} and \eqref{Compact dissipative}, we obtain that
\begin{align}\label{2.331}
|Q_Nu(t)|_{\mathcal{H}}\leq C_1e^{-\frac m2\lambda_{1}^{1+\beta}t}|Q_Nu_0|_{\mathcal{H}^{-\beta}}+C_2,\quad\text{for }\ t\geq1,
\end{align}
and
\begin{align}\label{short}
|Q_Nu(t)|_{\mathcal{H}}
&\leq C(m)|Q_Nu_0|_{\mathcal{H}}+C_0, \quad\text{for } t\in(0,1].
\end{align}
As a consequence of \eqref{2.331} and \eqref{short}, we arrive at \eqref{dissipative in H}.
\end{proof}

\subsection{The Existence of Inertial Manifolds}
\noindent

In this subsection, we extend the invariant cone method presented in, e.g., \cite{Anna2015,sell2002,Zelik} for the semilinear parabolic case to a class of nonlocal parabolic problems. In addition, we derive an explicit spectral gap condition for the modified nonlocal problem, which guarantees the existence of inertial manifolds.

We now recall the concept of the inertial manifold 
for the semigroup.

\begin{definition}\label{IM}
A subset $\mathcal{M}\subset \mathcal{H}^{-\beta}$
	is called an inertial manifold for the semigroup $S_a(t)$ associated with \eqref{nonlocal problem}, provided the following conditions are
	satisfied:
	\begin{enumerate}
		\item $\mathcal{M}$ is invariant, i.e., $S_a(t)\mathcal{M}=\mathcal{M}$, for all $t\geq 0$.
		\item $\mathcal{M}$ is a finite-dimensional Lipschitz manifold, i.e., there
		exists a Lipschitz continuous function $\Phi :\mathcal{H}^{-\beta}_{+}\rightarrow \mathcal{H}^{-\beta}_{-}$ such
		that $\mathcal M$ is the graph of $\Phi$, that is, 
		\begin{equation*}
			\mathcal{M} =\{u \in\mathcal{H}^{-\beta}; \; u = p+\Phi (p),\,p\in \mathcal{H}^{-\beta}_{+}\}.
		\end{equation*}
		\item The exponential tracking property holds; that is, there exists a constant $\alpha>0$ such that, for every $u_0\in\mathcal{H}^{-\beta}$, there exist $v_0\in\mathcal{M}$ and a constant $C=C(u_0,v_0)>0$ satisfying
      \begin{equation}\label{tracking}
      |S_a(t)u_0-S_a(t)v_0|_{\mathcal{H}^{-\beta}}
      \leq C e^{-\alpha t},\quad \text{for all } t\ge0.
        \end{equation}
       \end{enumerate}
\end{definition}

In what follows, for convenience, we introduce the following notation:
$$
V(w):=|Q_Nw|_{\mathcal{H}^{-\beta}}^2-|P_Nw|_{\mathcal{H}^{-\beta}}^2, \quad w\in{\mathcal{H}^{-\beta}},
$$
$$
K^{+}:=\{w\in{\mathcal{H}^{-\beta}}; V(w)\leq 0\}.
$$
With this notation, we recall the definition of the strong squeezing property.

\begin{definition}
We say that the semigroup $S_a(t)$ associated with \eqref{nonlocal problem} possesses 
the strong squeezing property, if the following properties hold:
\begin{enumerate}
\item Invariance of the cone $K^{+}$: if
\begin{align}\label{invariance of cone}
u_1-u_2 \in K^{+} \Rightarrow S_a(t)u_1-S_a(t)u_2 \in K^{+},\ 
\end{align}
 for all $u_1, u_2 \in \mathcal{H}^{-\beta}$ and 
 $t\geq 0$.
\item The squeezing property: there exist positive constants
$\gamma$ and $C$ such that, for every $T>0$ and every
$u_1,u_2\in\mathcal H^{-\beta}$,
\begin{align}
S_a(T)u_1-S_a(T)u_2\notin K^+
\Rightarrow
|S_a(t)u_1-S_a(t)u_2|_{\mathcal H^{-\beta}}
\leq
Ce^{-\gamma t}|u_1-u_2|_{\mathcal H^{-\beta}},
\end{align}
for all $t\in[0,T]$.
\end{enumerate}
\end{definition}

Now, we state the following result.
\begin{theorem}\label{abstract theorem}
Let the assumptions of Proposition \ref{well poseness} be satisfied. In addition, suppose that the solution semigroup $S_a(t)$ associated with \eqref{nonlocal problem} satisfies the strong squeezing property. Then problem \eqref{nonlocal problem} possesses an $N$-dimensional inertial manifold in the sense of Definition \ref{IM}.
\end{theorem}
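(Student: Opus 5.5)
The plan is to follow the standard invariant cone construction of Zelik and of Kostianko--Zelik, adapted to the semigroup $S_a(t)$ on $\mathcal{H}^{-\beta}$. The manifold is built as the limit of a family of graphs obtained by solving boundary value problems "backward in time". Fix $T>0$ and $p\in\mathcal{H}^{-\beta}_+$. We first show that there exists $u_0^T\in\mathcal{H}^{-\beta}$ with $P_N S_a(T)u_0^T=p$ and $Q_Nu_0^T=0$. To get it, consider the finite-dimensional map $\mathcal{T}:\mathcal{H}^{-\beta}_+\to\mathcal{H}^{-\beta}_+$, $\mathcal{T}(q)=P_NS_a(T)q$. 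This map is continuous by \eqref{continuous}. It is injective because the cone property \eqref{invariance of cone} applies: $q_1-q_2\in K^+$ forces $S_a(T)q_1-S_a(T)q_2\in K^+$, so equal $P_N$-parts force equal images and hence $q_1=q_2$, using backward uniqueness from the squeezing alternative. Finally, $\mathcal{T}$ is proper by the dissipative estimate \eqref{solution estimate}. By Brouwer's invariance of domain, $\mathcal{T}$ is then a homeomorphism onto $\mathcal{H}^{-\beta}_+$. Define $\Phi_T(p):=Q_NS_a(T)\mathcal{T}^{-1}(p)$, whose graph $\mathcal{M}_T=S_a(T)\mathcal{H}^{-\beta}_+$.

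\textbf{Step 1 (Lipschitz bound).} For two points of $\mathcal{M}_T$, their preimages differ by an element of $\mathcal{H}^{-\beta}_+\subset K^+$. By the cone invariance \eqref{invariance of cone}, the images differ by an element of $K^+$. This gives $|\Phi_T(p_1)-\Phi_T(p_2)|_{\mathcal{H}^{-\beta}}\le|p_1-p_2|_{\mathcal{H}^{-\beta}}$ uniformly in $T$.

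\textbf{Step 2 (uniform bound).} Using \eqref{dissipative in H}, or rather its $\mathcal{H}^{-\beta}$ analogue obtained the same way from \eqref{formula1}, applied with $Q_Nu_0=0$, we get $|\Phi_T(p)|_{\mathcal{H}^{-\beta}}\le R$ with $R$ independent of $T$ and $p$.

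\textbf{Step 3 (convergence).} For $T'>T$, compare $\mathcal{M}_{T'}$ and $\mathcal{M}_T$ at the same $p$. The two trajectories end at points with equal $P_N$ part and different $Q_N$ part, so the difference at the final time lies outside $K^+$ (or vanishes). The squeezing property, applied on the interval of length $T$ to the two starting points $S_a(T'-T)\mathcal{T}_{T'}^{-1}p$ and $\mathcal{T}_T^{-1}p$, gives $|\Phi_{T'}(p)-\Phi_T(p)|\le Ce^{-\gamma T}\cdot 2R'$. Here $R'$ bounds both initial data by Step 2 and the absorbing ball of Proposition \ref{global attractor}. Hence $\Phi_T\to\Phi$ uniformly, and $\Phi$ is $1$-Lipschitz.

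\textbf{Step 4 (invariance).} Invariance $S_a(t)\mathcal{M}=\mathcal{M}$ follows from $S_a(t)\mathcal{M}_T=\mathcal{M}_{T+t}$, passing to the limit with the continuity estimate \eqref{continuous}. Surjectivity onto $\mathcal{M}$ comes again from the homeomorphism argument on $P_N$ parts.

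\textbf{Step 5 (exponential tracking).} This is the main obstacle. For $u_0$, we seek $v_0\in\mathcal{M}$ with $S_a(t)u_0-S_a(t)v_0\notin K^+$ for all $t$; then squeezing gives the decay \eqref{tracking}. The plan is to take $v_0^T\in\mathcal{M}$ with $P_NS_a(T)v_0^T=P_NS_a(T)u_0$, using that $S_a(T)$ maps $\mathcal{M}$ onto $\mathcal{M}$ and that $\mathcal{M}$ is a graph over $\mathcal{H}^{-\beta}_+$. Then $S_a(T)u_0-S_a(T)v_0^T\notin K^+$ unless it is zero, so squeezing gives $|S_a(t)u_0-S_a(t)v_0^T|\le Ce^{-\gamma t}|u_0-v_0^T|$ on $[0,T]$. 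To conclude, the $v_0^T$ must be bounded uniformly in $T$, which is the delicate point. Cone invariance shows $u_0-v_0^T\notin K^+$, and since $\Phi$ is bounded this yields $|P_N(u_0-v_0^T)|\le|Q_N(u_0-v_0^T)|\le|u_0|+R$. The $v_0^T$ then lie in a bounded subset of the finite-dimensional-graph $\mathcal{M}$, so they are precompact. Extract $v_0^{T_k}\to v_0\in\mathcal{M}$ ($\mathcal{M}$ is closed), and pass to the limit for each fixed $t$ using \eqref{continuous}. Care is needed because the constant in \eqref{continuous} depends on the data and on the time horizon, but it is uniform on bounded sets, which suffices for fixed $t$.
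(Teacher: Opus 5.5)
\textbf{The main gap: injectivity of $\mathcal T$.} Your use of invariance of domain rests on $\mathcal T=P_NS_a(T)|_{\mathcal H^{-\beta}_+}$ being injective, and so does the surjectivity you claim in Step 4. Cone invariance correctly reduces $\mathcal T(q_1)=\mathcal T(q_2)$ to $S_a(T)q_1=S_a(T)q_2$. But the step ``hence $q_1=q_2$, using backward uniqueness from the squeezing alternative'' has no content. The squeezing property needs $S_a(T)u_1-S_a(T)u_2\notin K^+$, whereas here the final difference is $0\in K^+$, so it says nothing. Cone invariance only gives $w(t)\in K^+$ on $[0,T]$, which is compatible with $w(T)=0$.

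The paper has to go back to the equation at this point. For $w=u_1-u_2\in K^+$ it writes the equations for $p=P_Nw$ and $q=Q_Nw$. It then uses the $q$-equation together with $|q|_{\mathcal H^{-\beta}}\le|p|_{\mathcal H^{-\beta}}$ to bound $\int|q|^2_{\mathcal H^1}$ by $|p|^2$-terms. This yields a backward Gronwall estimate $|p(t)|^2_{\mathcal H^{-\beta}}\le C|p(0)|^2_{\mathcal H^{-\beta}}$ on $[-T,0]$, with $C$ depending on the $L^\infty(\mathcal H^{-\beta})\cap L^2(\mathcal H^1)$ bounds of $u_1,u_2$. So $p(0)=0$ forces $p(-T)=0$.

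Alternatively, you can avoid injectivity altogether. The homotopy $s\mapsto P_NS_a(sT)$, $s\in[0,1]$, joins $\mathcal T$ to the identity on $\mathcal H^{-\beta}_+$. An a priori bound excluding solutions of $P_NS_a(sT)q=p$ on a large sphere then gives Brouwer degree one, hence surjectivity. The point of $S_a(T)\mathcal H^{-\beta}_+$ above each $p$ is unique by cone invariance alone, and that is all you need to define $\Phi_T$. The same device applied to $p\mapsto P_NS_a(t)(p+\Phi(p))$ gives $\mathcal M\subset S_a(t)\mathcal M$.

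\textbf{Two further misjustifications, both repairable.} First, properness of $\mathcal T$ does not follow from \eqref{solution estimate}: that only says bounded sets have bounded images. You need the converse, that a bound on $|P_NS_a(T)q|$ bounds $|q|$. This requires a backward low-mode estimate such as \eqref{2.334}, obtained from $\frac12\frac{d}{dt}|P_Nu|^2_{\mathcal H^{-\beta}}\ge-(M\lambda_N^{1+\beta}+\lambda_N^\beta)|P_Nu|^2_{\mathcal H^{-\beta}}-\frac{K^2}{4}$. It is also exactly the bound the degree argument needs.

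Second, in Step 3 the two starting points are not uniformly bounded. Step 2 controls only their $Q_N$-components. The quantity $|\mathcal T_T^{-1}p|$ generally grows exponentially in $T$, already for $F=0$. The absorbing ball controls forward images only after a time depending on the size of the initial datum. What is bounded is their difference $w(0)$. Since the difference at the final time lies outside $K^+$ (or vanishes), cone invariance forces $w(0)\notin K^+$. Hence $|w(0)|_{\mathcal H^{-\beta}}\le\sqrt2|Q_Nw(0)|_{\mathcal H^{-\beta}}\le\sqrt2R$, which is how the paper argues in \eqref{2.31}.

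With these repairs your construction follows the paper's, and your Step 5 is essentially the paper's tracking argument.
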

\begin{proof}
As shown in \cite{Robinson}, the strong squeezing property implies the existence of an inertial manifold via the graph transform method. Here, we provide an alternative proof of this result by following a similar idea to that in \cite{Zelik}. The argument can be divided into three steps.\\
{\textbf{Step 1.}} We aim to prove that the following boundary value problem:
\begin{equation}\label{backward solution}
	\left\{\begin{aligned}
		&\partial_tu+a(|u|^2_{\mathcal{H}^{-\beta}})A^{1+\beta}u=A^{\beta}F(u),\\
		&P_N u(0)=u_0^+, \quad Q_N u(-T)=0,
	\end{aligned}\right.
\end{equation} 
has a unique solution for any $T>0$ and $u_0^+\in\mathcal{H}_{+}^{-\beta}$. In fact, we consider the map $G_T$ : $\mathcal{H}_{+}^{-\beta}\rightarrow \mathcal{H}_{+}^{-\beta}$ given by
$$
G_T(v^+)=P_N S_a(T)v^+,\quad\text {for}\ v^+ \in \mathcal{H}_{+}^{-\beta}.
$$
By virtue of \eqref{continuous}, we note that $G_T$ is Lipschitz continuous. Next we will show that the map $G_T$ is injective. Assume that $G_T(v_1^+)=G_T(v_2^+)$
for some $v_1^+,v_2^+\in\mathcal{H}_{+}^{-\beta}$, and define
\[
u_i(t):=S_a(t+T)v_i^+,
\quad t\in[-T,0],\quad i=1,2.
\]
Set $w(t)=u_1(t)-u_2(t)$, from the boundary value condition of \eqref{backward solution}, we see that $w(-T)\in K^{+}$. 
It follows from \eqref{invariance of cone} that $w(t)\in K^{+}$ for $t\in[-T,0]$.
Let $P_Nw=p$, $Q_Nw=q$, ${\bar{a}(w)=a(|u_1|^2_{ \mathcal{H}^{-\beta}})-a(|u_2|^2_{ \mathcal{H}^{-\beta}})}$, then one has
\begin{align}\label{p}
	\partial_tp+a(|u_1|^2_{{\mathcal{H}}^{-\beta}})A^{1+\beta}p+\bar{a}(w)A^{1+\beta}P_{N}u_2=A^{\beta}P_{N}({F}(u_1)-{F}(u_2)),
	\end{align}
\begin{align}\label{q}
	\partial_tq+a(|u_1|^2_{{\mathcal{H}}^{-\beta}})A^{1+\beta}q+\bar{a}(w)A^{1+\beta}Q_{N}u_2=A^{\beta}Q_{N}({F}(u_1)-{F}(u_2)),
\end{align}
Multiplying \eqref{p} by $A^{-\beta}p$, we have
\begin{align}\label{2.25}
\frac{1}{2} \frac{\mathrm{d}}{\mathrm{d}t}|p|_{\mathcal{H}^{-\beta}}^2&=-a(|u_1|_{\mathcal{H}^{-\beta}}^2)(Ap,p)-\bar{a}(w)(AP_Nu_2,p)+(F(u_1)-F(u_2),p)\\ \nonumber
&\geq-M\lambda_{N}^{1+\beta}|p|_{\mathcal{H}^{-\beta}}^2-\ell_a(|u_1|_{\mathcal{H}^{-\beta}}+|u_2|_{\mathcal{H}^{-\beta}})|AP_Nu_2|_{\mathcal{H}}|w|_{\mathcal{H}^{-\beta}}|p|_{\mathcal{H}}\\
&\quad-L(|p|_{\mathcal{H}}^2+|p|_{\mathcal{H}}|q|_{\mathcal{H}}).
\end{align}
Moreover, we observe that
\begin{align}\label{2.26}
	L(|p|_{\mathcal{H}}^2+|p|_{\mathcal{H}}|q|_{\mathcal{H}})&\leq L\lambda_{N}^{\beta}|p|_{\mathcal{H}^{-\beta}}^2+L\lambda_{N}^{\frac{\beta}{2}}\lambda_{N+1}^{-\frac12}|p|_{\mathcal{H}^{-\beta}}|q|_{\mathcal{H}^{1}}\\\nonumber
	&\leq L\lambda_{N}^{\beta}(1+\frac{1}{m}L\lambda_{N+1}^{-1})|p|_{\mathcal{H}^{-\beta}}^2+\frac{m}{4}|q|_{\mathcal{H}^{1}}^2,
\end{align}
and 
\begin{align}\label{2.27}
&(|u_1|_{\mathcal{H}^{-\beta}}+|u_2|_{\mathcal{H}^{-\beta}})|AP_Nu_2|_{\mathcal{H}}|w|_{\mathcal{H}^{-\beta}}|p|_{\mathcal{H}} \nonumber\\
&\leq(|u_1|_{\mathcal{H}^{-\beta}}+|u_2|_{\mathcal{H}^{-\beta}})\lambda_{N}^{1+\frac{\beta}{2}}|u_2|_{\mathcal{H}^{-\beta}}\lambda_{N}^{\frac{\beta}{2}}|p|_{\mathcal{H}^{-\beta}}(|p|^2_{\mathcal{H}^{-\beta}}+|q|^2_{\mathcal{H}^{-\beta}})^{\frac12}\\ \nonumber
&\leq\sqrt{2}\lambda_{N}^{1+{\beta}}(|u_1|_{\mathcal{H}^{-\beta}}+|u_2|_{\mathcal{H}^{-\beta}})|u_2|_{\mathcal{H}^{-\beta}}|p|_{\mathcal{H}^{-\beta}}^2\\
&\leq\frac{\sqrt{2}}{2}\lambda_{N}^{1+{\beta}}(|u_1|_{\mathcal{H}^{-\beta}}^2+3|u_2|_{\mathcal{H}^{-\beta}}^2)|p|_{\mathcal{H}^{-\beta}}^2. \nonumber
\end{align}
Taking into account \eqref{2.25}, \eqref{2.26} and \eqref{2.27}, we deduce that
\begin{align*}
&\frac{\mathrm{d}}{\mathrm{d}t}|p|_{\mathcal{H}^{-\beta}}^2\\
&\geq-2\lambda_{N}^{1+\beta}\left(M+\frac{\sqrt{2}}{2}\ell_a(|u_1|_{\mathcal{H}^{-\beta}}^2+3|u_2|_{\mathcal{H}^{-\beta}}^2)+L\lambda_{N}^{-1}(1+\frac{1}{m}L\lambda_{N+1}^{-1})\right)|p|_{\mathcal{H}^{-\beta}}^2+\frac{m}{2}|q|_{\mathcal{H}^{1}}^2.
\end{align*}
Integrating the above inequality from $t<0$ to $0$, we obtain that
\begin{align}\label{2.28}
|p(t)|_{\mathcal{H}^{-\beta}}^2&\leq|p(0)|_{\mathcal{H}^{-\beta}}^2+\int_{t}^{0}\frac{m}{2}|q(s)|^2_{\mathcal{H}^{1}}ds\\ \nonumber
&\quad+\int_{t}^{0}\lambda_{N}^{1+\beta}\left(2M+\sqrt{2}\ell_a(|u_1|_{\mathcal{H}^{-\beta}}^2+3|u_2|_{\mathcal{H}^{-\beta}}^2)+2L\lambda_{N}^{-1}(1+\frac{1}{m}L\lambda_{N+1}^{-1})\right)|p|_{\mathcal{H}^{-\beta}}^2ds.
\end{align}
Furthermore, we have
\begin{align}\label{2.29}
&\int_{t}^{0}\frac{m}{2}|q(s)|_{\mathcal{H}^{1}}^2ds\\ \nonumber
&\leq\frac{1}{2}|p(t)|_{\mathcal{H}^{-\beta}}^2+\int_{t}^{0}\left(\frac{2\ell_a^2}{m}(|u_1|_{\mathcal{H}^{-\beta}}^2+|u_2|_{\mathcal{H}^{-\beta}}^2)|u_2|_{\mathcal{H}^{1}}^2+L\lambda_{N}^{\beta}+\frac{L^{1+\beta}}{1+\beta}\left(\frac{m}{4}\right)^{-\beta}\right)|p|_{\mathcal{H}^{-\beta}}^2ds.
\end{align}
Indeed, we deduce that
\begin{align*}
|\bar{a}(w)(Au_2,q)|&\leq\ell_a(|u_1|_{\mathcal{H}^{-\beta}}+|u_2|_{\mathcal{H}^{-\beta}})|w|_{\mathcal{H}^{-\beta}}|u_2|_{\mathcal{H}^{1}}|q|_{\mathcal{H}^{1}}\\
&\leq\frac{m}{4}|q|_{\mathcal{H}^{1}}^2+\frac{\ell_a^2}{m}(|u_1|_{\mathcal{H}^{-\beta}}+|u_2|_{\mathcal{H}^{-\beta}})^2|u_2|_{\mathcal{H}^{1}}^2|w|_{\mathcal{H}^{-\beta}}^2
\\
&\leq\frac{m}{4}|q|_{\mathcal{H}^{1}}^2+\frac{2\ell_a^2}{m}(|u_1|_{\mathcal{H}^{-\beta}}^2+|u_2|_{\mathcal{H}^{-\beta}}^2)|u_2|_{\mathcal{H}^{1}}^2|p|_{\mathcal{H}^{-\beta}}^2.
\end{align*}
and 
\begin{align*}
|(F(u_1)-F(u_2),q)|&\leq L|w|^2_{\mathcal{H}}\\
&\leq L(|p|_{\mathcal{H}^{-\beta}}|p|_{\mathcal{H}^{\beta}}+|q|_{\mathcal{H}^{-\beta}}^{\frac{2}{1+\beta}}|q|_{\mathcal{H}^{1}}^{\frac{2\beta}{1+\beta}})\\
&\leq L\lambda_{N}^{\beta}|p|_{\mathcal{H}^{-\beta}}^2+
\frac{m}{4}|q|_{\mathcal{H}^{1}}^2+\frac{L^{1+\beta}}{1+\beta}\left(\frac{m}{4}\right)^{-\beta}|p|_{\mathcal{H}^{-\beta}}^2.
\end{align*}
Therefore, multiplying \eqref{q} by $A^{-\beta}q$, we have
\begin{align*}
 \frac12\frac{\mathrm{d}}{\mathrm{d}t}|q|_{\mathcal{H}^{-\beta}}^2+\frac{m}{2}|q|_{\mathcal{H}^{1}}^2
&\leq\left(\frac{2\ell_a^2}{m}(|u_1|_{\mathcal{H}^{-\beta}}^2+|u_2|_{\mathcal{H}^{-\beta}}^2)|u_2|_{\mathcal{H}^{1}}^2+L\lambda_{N}^{\beta}+
\frac{L^{1+\beta}}{1+\beta}\left(\frac{m}{4}\right)^{-\beta}\right)|p|_{\mathcal{H}^{-\beta}}^2.
\end{align*}
Integrating the above inequality from $t$ to $0$, since $w(t)\in K^{+}$ for $t\in[-T,0]$, we infer that \eqref{2.29} is true.
Combining \eqref{2.28} and \eqref{2.29}, we deduce that
\begin{align}\label{2.446}
&\frac{1}{2}|p(t)|_{\mathcal{H}^{-\beta}}^2\\
&\leq|p(0)|_{\mathcal{H}^{-\beta}}^2+\int_{t}^{0}\left(\frac{2\ell_a^2}{m}(|u_1|_{\mathcal{H}^{-\beta}}^2+|u_2|_{\mathcal{H}^{-\beta}}^2)|u_2|_{\mathcal{H}^{1}}^2+L\lambda_{N}^{\beta}+
\frac{L^{1+\beta}}{1+\beta}\left(\frac{m}{4}\right)^{-\beta}\right)|p|_{\mathcal{H}^{-\beta}}^2ds \nonumber\\
&\quad+\int_{t}^{0}\lambda_{N}^{1+\beta}\left(2M+\sqrt{2}\ell_a(|u_1|_{\mathcal{H}^{-\beta}}^2+3|u_2|_{\mathcal{H}^{-\beta}}^2)+2L\lambda_{N}^{-1}(1+\frac{1}{m}L\lambda_{N+1}^{-1})\right)|p|_{\mathcal{H}^{-\beta}}^2ds.\nonumber
\end{align}
Moreover, taking the inner product of \eqref{backward solution} with $A^{-\beta}P_Nu$, we obtain
\begin{align*}
\frac{1}{2}\frac{\mathrm d}{\mathrm dt}
|P_Nu|_{\mathcal H^{-\beta}}^2
\geq-(M\lambda_N^{1+\beta}+\lambda_N^\beta)
|P_Nu|_{\mathcal H^{-\beta}}^2
-\frac{K^2}{4}.
\end{align*}
Set $c_N:=M\lambda_N^{1+\beta}+\lambda_N^\beta.$
Multiplying the above inequality by $2e^{2c_Nt}$ and integrating over $[t,0]$, where $t\in[-T,0]$, yields
\begin{align}\label{2.334}
|P_Nu(t)|_{\mathcal H^{-\beta}}^2
&\leq e^{2c_NT}(|P_Nu(0)|_{\mathcal H^{-\beta}}^2+\frac{K^2}{4c_N}).
\end{align}
On the other hand, by Proposition \ref{well poseness}, for every $T>0$ and $i=1,2$, we have
\begin{align*}
&|u_i|_{L^\infty((-T,0);\mathcal H^{-\beta})}^2
+|u_i|_{L^2((-T,0);\mathcal H^1)}^2\leq
C(m,T,K)+|P_Nu_i(-T)|_{\mathcal H^{-\beta}}^2.
\end{align*}
Using \eqref{2.334} with $t=-T$, we therefore obtain
\begin{align*}
&|u_i|_{L^\infty((-T,0);\mathcal H^{-\beta})}^2
+|u_i|_{L^2((-T,0);\mathcal H^1)}^2\leq C(T,m,M,\beta,\lambda_N,K,
|P_Nu_i(0)|_{\mathcal H^{-\beta}})<\infty.
\end{align*}
Thus, by applying Gronwall's inequality to \eqref{2.446}, we conclude that
\begin{align*}
|p(t)|_{\mathcal H^{-\beta}}^2
&\leq
C(T,\ell_a,L,m,M,\beta,\lambda_N,\lambda_{N+1},K,
|P_Nu_i(0)|_{\mathcal H^{-\beta}})
|p(0)|_{\mathcal H^{-\beta}}^2,
\quad t\in[-T,0].
\end{align*}
Since $p(0)=0$, it follows that $p(-T)=0.$
On the other hand, $q(-T)=0$. Hence
\[
v_1^+-v_2^+=w(-T)=p(-T)+q(-T)=0,
\]
and therefore $v_1^+=v_2^+.$ Thus, $G_T$ is injective. By the theorem of invariance of domain \cite{Brouwer,James2000}, $G_T(\mathcal{H}_{+}^{-\beta})$ is an open set and $G_T:\mathcal{H}_{+}^{-\beta}\to G_T(\mathcal{H}_{+}^{-\beta})$ is a homeomorphism.

We now show that $G_T(\mathcal{H}_{+}^{-\beta})$ is closed. Let $\{v_n^+\}_{n\in\mathbb{N}}\subset\mathcal{H}_{+}^{-\beta}$ be a sequence such that $G_{T}(v_n^+)$ converges to $v^+\in\mathcal{H}_{+}^{-\beta}$. Thus, the sequence $\{G_{T}(v_n^+)\}_{n\in\mathbb{N}}$ is bounded. It follows from \eqref{2.334} that $\{v_n^+\}_{n\in\mathbb{N}}$ is bounded. Since ${\mathcal{H}_{+}^{-\beta}}$ is finite-dimensional, $\{v_n^+\}_{n\in\mathbb{N}}$ has a convergent subsequence (still denoted by $\{v_n^+\}_{n\in\mathbb{N}}$) that converges to some $z^+\in\mathcal{H}_{+}^{-\beta}$. From the continuity of $G_T$, we see that $G_T(v_n^+)\to G_T(z^+)$ as $n\to \infty,$ which gives $G_T(z^+)=v^+$. Hence, $G_T(\mathcal{H}_{+}^{-\beta})$ is closed and $G_T(\mathcal{H}_{+}^{-\beta})=\mathcal{H}_{+}^{-\beta}$. Consequently, $G_T$ is a homeomorphism from $\mathcal{H}_{+}^{-\beta}$ to itself. As a result, $G_T(v^+)=u_0^+$ is uniquely solvable for all $v^+\in \mathcal{H}_{+}^{-\beta}$. It follows that $u_{T,u_0^+}(t):=S_a(t+T) G_T^{-1}\left(u_0^+\right)$ is the unique solution of \eqref{backward solution}.\\
{\textbf{Step 2.}} 
We intend to prove 
\begin{align}\label{limit}
{u}_{\infty,u_0^+}(t):=\lim _{T \rightarrow \infty} u_{T,u_0^+}(t)
\end{align}
exists for all $t \leq 0$ and it is a backward solution of the problem \eqref{backward solution} with $T=\infty$ and $P_N u(0)=u_0^+$.

Indeed, suppose $T_2>T_1>0$, and set $w(t)=u_{T_1,u_0^+}(t)-u_{T_2,u_0^+}(t)$, $p(t)=P_N w(t)$ and $q(t)=Q_N w(t)$,
for $t \in\left[-T_1, 0\right]$. We note that
$$
p(0)=P_N u_{T_1}(0)-P_N u_{T_2}(0)=0,
$$
then $w(0) \notin K^{+}$. By the invariance of $K^{+}$, we find that
 $$
 |q(t)|_{\mathcal{H^{-\beta}}}\geq|p(t)|_{\mathcal{H^{-\beta}}},\quad \text{ for }t \in\left[-T_1, 0\right], 
 $$ and thus $$
 |w(t)|_{\mathcal{H^{-\beta}}}\leq\sqrt{2}|q(t)|_{\mathcal{H^{-\beta}}},
 \quad \text{ for }t \in\left[-T_1, 0\right].
 $$ 
 Since $Q_N u_{T_1,u_0^+}\left(-T_1\right)=0$ and the squeezing property, we obtain
\begin{align}\label{2.31}
|w(t)|_{\mathcal{H^{-\beta}}}&\leq Ce^{-\gamma\left(t+T_1\right)}|w\left(-T_1\right)|_{\mathcal{H^{-\beta}}} \nonumber\\
&\leq\sqrt{2}C e^{-\gamma\left(t+T_1\right)}|q\left(-T_1\right)|_{\mathcal{H^{-\beta}}}\\
&\leq\sqrt{2}C e^{-\gamma\left(t+T_1\right)}|Q_N u_{T_2,u_0^+}\left(-T_1\right)|_{\mathcal{H^{-\beta}}} \nonumber\\
&\leq \sqrt{2}C\lambda_{N+1}^{-\frac{\beta}{2}}e^{-\gamma\left(t+T_1\right)}|Q_N u_{T_2,u_0^+}\left(-T_1\right)|_{\mathcal{H}}, \quad \text{ for }t \in\left[-T_1, 0\right].\nonumber
\end{align}
Moreover, by \eqref{dissipative in H}, we deduce that
\begin{align*}
|u_{T_1,u_0^+}(t)-u_{T_2,u_0^+}(t)|_{\mathcal{H^{-\beta}}}\leq
\sqrt{2}CR_1\lambda_{N+1}^{-\frac{\beta}{2}}e^{-\gamma\left(t+T_1\right)}, \quad\text{for}~~t \in[-T_1, 0].
\end{align*}
In addition, for any $t^*>0$, as $T_1\rightarrow \infty$, we see that
$$
|u_{T_1,u_0^+}(t)-u_{T_2,u_0^+}(t)|_{\mathcal{H^{-\beta}}}\rightarrow 0, \quad \text {uniformly on }\left[-t^*, 0\right].
$$
This implies that $\{u_{T,u_0^+}:T>0\}$ is a Cauchy sequence in $C\left(\left[-t^*, 0\right] ; \mathcal{H}^{-\beta}\right)$ for any $t^*>0$. Hence, the limit \eqref{limit} exists and ${u}_{\infty,u_0^+}(t)$ is a backward solution of \eqref{backward solution}. Moreover, we observe that the high-mode component of $u_{\infty,u_0^+}(0)$ is uniformly bounded with respect to $u_0^+$. Indeed, for every $T>0$, by construction, $Q_Nu_{T,u_0^+}(-T)=0.$
Applying \eqref{dissipative in H} to $u_{T,u_0^+}(t-T)$, $t\in[0,T]$, we obtain
\[
|Q_Nu_{T,u_0^+}(0)|_{\mathcal H}\leq R_1.
\]
Consequently,
\[
|Q_Nu_{T,u_0^+}(0)|_{\mathcal H^{-\beta}}
\leq\lambda_{N+1}^{-\frac{\beta}{2}}|Q_Nu_{T,u_0^+}(0)|_{\mathcal H}
\leq
\lambda_{N+1}^{-\frac{\beta}{2}}R_1.
\]
Since
\[
u_{T,u_0^+}(0)\to u_{\infty,u_0^+}(0)
\quad\text{in }\mathcal H^{-\beta}\quad \text{as }T\to\infty,
\]
passing to the limit gives
\begin{equation}\label{uniform high mode}
|Q_Nu_{\infty,u_0^+}(0)|_{\mathcal H^{-\beta}}
\leq\lambda_{N+1}^{-\frac{\beta}{2}}R_1,
\quad
u_0^+\in\mathcal H_+^{-\beta}.
\end{equation}
{\textbf{Step 3.}} 
It follows from Proposition \ref{well poseness} that $u_{\infty,u_0^+}(t)$ has a unique extension, denoted by $u_{\infty}(t)$ that solves \eqref{backward solution} for all $t\in\mathbb{R}$. Let $\mathcal{S}$ denote the set of all such solutions.
By construction, $\mathcal{S}$ is invariant with respect to $t$, that is,
$$
T(h) \mathcal{S}=\mathcal{S}, \quad T(h)u_{\infty}(t):=u_{\infty}(t+h), \quad h \in \mathbb{R}.
$$
Define $\Phi: {\mathcal{H}_{+}^{-\beta}}\rightarrow \mathcal{H}^{-\beta}_{-}$ by
	$$
	\Phi\left(u_0^+\right):=Q_N u_{\infty}(0),
	$$
with $P_N u_{\infty}(0)=u_0^+$. It is obvious that for any ${u}_{\infty}^1$ and ${u}_{\infty}^2 \in \mathcal{S}$,
$u_{\infty}^1(t)-u_{\infty}^2(t) \in K^{+}$ for all $t \in \mathbb{R}$. As a direct consequence, $\Phi$ is Lipschitz continuous. Let us consider the set 
$$
\mathcal{M}:=\{u_0^++\Phi\left(u_0^+\right), u_0^+ \in {\mathcal{H}_{+}^{-\beta}}\},
$$ 
which is the desired inertial manifold. The invariance of $\mathcal{M}$ under the semigroup $S_a(t)$ follows immediately from the invariance of $\mathcal{S}$.

It remains to verify the exponential tracking property \eqref{tracking}. Indeed, let $u(t), t \geq0$, be a forward solution of \eqref{nonlocal problem}. We note that $P_Nu(t)\in\mathcal{H}_{+}^{-\beta}$, then, 
for any $T>0$, there exists $u_{\infty,T}(t)\in\mathcal{S}$ such that
	\begin{align*}
	P_N u(T)=P_N u_{\infty,T}(T),
	\end{align*}
	which implies that 
	$$
	|Q_Nu(T)-Q_Nu_{\infty,T}(T)|_{\mathcal{H^{-\beta}}}>0=|P_N u(T)-P_N u_{\infty,T}(T)|_{\mathcal{H^{-\beta}}}.
	$$
	By the squeezing property, we have
	\begin{align}\label{2.8}
	|P_N\left(u(t)-u_{\infty,T}(t)\right)|_{\mathcal{H^{-\beta}}}\leq|Q_N\left(u(t)-u_{\infty,T}(t)\right)|_{\mathcal{H^{-\beta}}},\quad \text{for}\ t\in[0, T],
	\end{align}
	and 
	\begin{align}\label{2.339}
	|u(t)-u_{\infty,T}(t)|_{\mathcal{H^{-\beta}}}&\leq C\mathrm{e}^{-\gamma t}|u(0)-u_{\infty,T}(0)|_{\mathcal{H^{-\beta}}}\nonumber\\
  &\leq C\mathrm{e}^{-\gamma t}|Q_N(u(0)-u_{\infty,T}(0))|_{\mathcal{H^{-\beta}}}.
	\end{align}
Since $u_{\infty,T}(0)\in\mathcal M$, we have
$Q_Nu_{\infty,T}(0)=\Phi(P_Nu_{\infty,T}(0)).$
Therefore, by \eqref{uniform high mode},
\[
|Q_Nu_{\infty,T}(0)|_{\mathcal H^{-\beta}}
\leq\lambda_{N+1}^{-\frac{\beta}{2}}R_1,
\qquad T>0.
\]
Thus, $\{Q_Nu_{\infty,T}(0)\}$ is uniformly bounded in $\mathcal H^{-\beta}$ with respect to $T$. Moreover, in view of \eqref{2.8}, we know that
 \begin{align*}
 |P_Nu_{\infty,T}(0)|_{\mathcal{H^{-\beta}}}\leq2|u(0)|_{\mathcal{H^{-\beta}}}+\lambda_{N+1}^{-\frac{\beta}{2}}R_1,
 \end{align*}
 that is, $\{P_Nu_{\infty,T}(0)\}$ is also uniformly bounded in $\mathcal{H}_{+}^{-\beta}$ with respect to $T$. 
 Since $\mathcal{H}_{+}^{-\beta}$ is finite-dimensional, $\{P_Nu_{\infty,T}(0)\}$ has a convergent subsequence (still denoted by $\{P_Nu_{\infty,T}(0)\}$) that converges to $u_{\infty}^+\in\mathcal{H}_{+}^{-\beta}$. 
In addition, thanks to $u_{\infty,T}(0)\in\mathcal{M}$ and the Lipschitz continuity of $\Phi$, we deduce that
\begin{align}\label{2.440}
{u}_{\infty,T}(0) \rightarrow {u}_{\infty}^++\Phi({u}_{\infty}^+):={u}_{\infty}(0) \in \mathcal{M}\quad\ \text{as}\ T\rightarrow \infty,
\end{align}
and the corresponding trajectory ${u}_{\infty}(t) \in \mathcal{S}$. Since ${u}_{\infty,T}$ and ${u}_{\infty}$ are both solutions of \eqref{nonlocal problem}, due to \eqref{continuous}, for any $t\geq0,$
we obtain that
	$$
	|{u}_{\infty,T}(t)-{u}_{\infty}(t)|_{\mathcal{H^{-\beta}}}^2 \leq e^{C(m,\ell_a,\lambda_1,L,K,|u_{\infty}(0)|_{{\mathcal{H}}^{-\beta}},|u_{\infty,T}(0)|_{{\mathcal{H}}^{-\beta}},t)t} |{u}_{\infty,T}(0)-{u}_{\infty}(0)|_{\mathcal{H^{-\beta}}}^2,
	$$
 which tends to 0 as $T\to\infty.$
	It follows from \eqref{2.339}, \eqref{2.440} and the above inequality that
	$$
	|u(t)-{u}_{\infty}(t)|_{\mathcal{H^{-\beta}}}\leq Ce^{-\gamma t}|u(0)-{u}_{\infty}(0)|_{\mathcal{H^{-\beta}}}, \quad\text { for } t \geq 0.
	$$
	This completes the proof.
\end{proof}	
\subsection{A Spectral Gap Condition for the Modified Problem}

Theorem \ref{abstract theorem} provides an abstract criterion for the existence of inertial manifolds in terms of the strong squeezing property. However, for the nonlocal problem \eqref{nonlocal problem}, it is not straightforward to verify this property directly through the strong cone condition as in the semilinear case, see \cite{li2024,Zelik} for more details. Indeed, if $u_1$ and $u_2$ are two solutions of \eqref{nonlocal problem}, then, in the differential inequality for the cone functional associated with $w=u_1-u_2$, an additional term of the form
\[
|(a(|u_1|_{H^{-\beta}}^2)-a(|u_2|_{H^{-\beta}}^2))
(Au_2,q-p)|
\]
appears. This term is caused by the dependence of the diffusion coefficient on the solution itself and cannot, in general, be controlled only by the spectral gap of $A$ and the Lipschitz constant of $F$. Therefore, a direct verification of the strong cone condition for \eqref{nonlocal problem} seems difficult.

To overcome this difficulty, we use the fact that the nonlocal coefficient is a positive scalar function and introduce a suitable change of the time scale. Since we are mainly interested in the long-time behavior of the solutions, we first suitably modify the nonlocal term outside an absorbing ball. After changing the scale of time, the modified nonlocal problem is transformed into a semilinear parabolic equation with the fixed principal part $A^{1+\beta}$. This enables us to apply the classical theory of inertial manifolds for semilinear parabolic equations. In particular, by estimating the Lipschitz constant of the transformed nonlinearity, we derive an explicit spectral gap condition which guarantees the existence of an inertial manifold for the modified problem.
To do this, we choose the following $\mathscr{C}^{\infty}$ cut-off function $\theta(s):\mathbb{R}^{+}\to[0,1]$ such that
\begin{equation}\label{2.551}
	\left\{\begin{aligned}
		&\theta(s)=1 \quad \text {for } 0 \leq s\leq1,\\
		&\theta(s)=0 \quad \text {for } s \geq 2,\\
		&\displaystyle\sup_{s \geq 0} |\theta^{\prime}(s)| \leq 2.
	\end{aligned}\right.
\end{equation} 
Set $\theta_{R}(s)=\theta(s/R)$. 
It follows from Proposition \ref{global attractor} that there exists a constant $\rho>0$ such that
$$
\mathcal{B_{\rho}}:=\{u\in\mathcal{H}^{-\beta}; |u|_{\mathcal{H}^{-\beta}}\leq
\rho\}
$$
is an absorbing ball for the semigroup $S_a(t)$ associated with problem \eqref{nonlocal problem}.
Then, we define a function $\tilde{a}(u):\mathcal{H}^{-\beta}\to\mathbb{R}^+$ by 
$$
\tilde{a}(u)=a(\theta_{\rho}(|u|_{\mathcal{H}^{-\beta}})|u|_{\mathcal{H}^{-\beta}}^2).
$$
Hence, $\tilde a$ is globally Lipschitz continuous on
$\mathcal H^{-\beta}$, and we may take $\ell_{\tilde a}:=12\ell_a\rho$ as its Lipschitz constant. In addition, in the absorbing ball $\mathcal{B}_{\rho}$,
problem \eqref{nonlocal problem} coincides with the following modified problem:
\begin{equation}\label{cut-off problem}
	\left\{\begin{aligned}
		&\partial_t u+\tilde a(u)A^{1+\beta}u=A^{\beta}F(u),\ t>0,\\
		&u(0)=u_0,
	\end{aligned}\right.
\end{equation} 
that is, problem \eqref{cut-off problem} contains all asymptotic behavior of \eqref{nonlocal problem}. Consequently, it is sufficient to investigate the existence of inertial manifolds for \eqref{cut-off problem}. 

As shown in \cite{chipot2015,chipot2003}, by changing the scale of time to
\begin{align*}
\tau=\mathcal{T}_u(t):=\int_0^t \tilde a(u(r, u_0))dr,
\end{align*}
it follows that $z(\tau,u_0):=u(t,u_0)$ is the solution of the following auxiliary semilinear parabolic problem:
\begin{equation}\label{changed cut-off problem}
	\left\{\begin{aligned}
		&\partial_{\tau}z+A^{1+\beta}z=A^{\beta}\tilde F(z),\ \tau>0,\\
		&z(0)=u_0,
	\end{aligned}\right.
\end{equation} 
where $\widetilde F(z):=\frac{F(z)}{\tilde a(z)}.$
The mapping $\widetilde F$ is globally bounded and globally Lipschitz on $\mathcal{H}$, as stated in the following lemma. Since the proof is standard, we omit it here and refer the reader to \cite{Robinson,Xiaoqing} for more details.

\begin{lemma}
Under the above assumptions, $\widetilde F:\mathcal H\to\mathcal H$ is globally bounded and globally Lipschitz continuous. More precisely,
\begin{equation}\label{tildeF}
	\begin{aligned}
	|\widetilde F(z)|_{\mathcal H}&\leq\frac{K}{m},\quad z\in\mathcal{H},\\
	|\widetilde F(z_1)-\widetilde F(z_2)|_{\mathcal H}
   &\leq(\frac{L}{m}+\frac{12K\ell_a\rho\lambda_1^{-\beta/2}}{m^2})|z_1-z_2|_{\mathcal H}, \quad z_1,z_2\in\mathcal{H}.
	\end{aligned}
\end{equation}
\end{lemma}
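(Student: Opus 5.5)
The plan is to use the pointwise bounds $m\le \tilde a\le M$ together with the Lipschitz bound for $\tilde a$ on $\mathcal H^{-\beta}$, and then pass from $\mathcal H^{-\beta}$ to $\mathcal H$ through the embedding constant.

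\textbf{Boundedness.} Since $a$ takes values in $[m,M]$, we have $\tilde a(z)\ge m$ for every $z$. Combined with $|F(z)|_{\mathcal H}\le K$ from \eqref{F}, this gives $|\widetilde F(z)|_{\mathcal H}\le K/m$ immediately.

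\textbf{Splitting the difference.} For the Lipschitz estimate I write
\[
\widetilde F(z_1)-\widetilde F(z_2)=\frac{F(z_1)-F(z_2)}{\tilde a(z_1)}+F(z_2)\,\frac{\tilde a(z_2)-\tilde a(z_1)}{\tilde a(z_1)\tilde a(z_2)}.
\]
The first term is bounded by $\frac{L}{m}|z_1-z_2|_{\mathcal H}$. The second term is bounded by $\frac{K}{m^2}|\tilde a(z_1)-\tilde a(z_2)|$.

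\textbf{Lipschitz bound for $\tilde a$.} Set $g(s):=\theta_\rho(s)s^2$ for $s\ge0$. On $[0,2\rho]$ we have
\[
g'(s)=\theta'(s/\rho)\,s^2/\rho+2s\theta_\rho(s),
\]
so $|g'(s)|\le 2\cdot 4\rho^2/\rho+4\rho=12\rho$, and $g'$ vanishes for $s\ge 2\rho$. Hence $g$ is $12\rho$-Lipschitz on $\mathbb R^+$. Since $s\mapsto|u|_{\mathcal H^{-\beta}}$ is $1$-Lipschitz (triangle inequality), it follows that
\[
|\tilde a(z_1)-\tilde a(z_2)|\le \ell_a\,|g(|z_1|_{\mathcal H^{-\beta}})-g(|z_2|_{\mathcal H^{-\beta}})|\le 12\ell_a\rho\,|z_1-z_2|_{\mathcal H^{-\beta}}.
\]
This is the claimed constant $\ell_{\tilde a}$. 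The only care needed is this computation of $\sup|g'|$ using $|\theta'|\le2$ and the support of $\theta_\rho$.

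\textbf{Embedding and conclusion.} For $z\in\mathcal H$, the Fourier series \eqref{fourier series} together with $\lambda_n\ge\lambda_1$ gives
\[
|z|_{\mathcal H^{-\beta}}^2=\sum\lambda_n^{-\beta}z_n^2\le\lambda_1^{-\beta}|z|_{\mathcal H}^2,
\]
that is, $|z|_{\mathcal H^{-\beta}}\le \lambda_1^{-\beta/2}|z|_{\mathcal H}$. Inserting this into the second term of the splitting yields $\frac{12K\ell_a\rho\lambda_1^{-\beta/2}}{m^2}|z_1-z_2|_{\mathcal H}$. Adding the two contributions gives \eqref{tildeF}.
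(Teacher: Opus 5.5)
Your argument is correct and matches the approach the paper indicates. The paper omits this proof as standard, but it records $\ell_{\tilde a}=12\ell_a\rho$ as the Lipschitz constant of $\tilde a$ on $\mathcal H^{-\beta}$, which is exactly what your bound $\sup_{s\ge0}|g'(s)|\le 12\rho$ gives; together with your splitting, the lower bound $\tilde a\ge m$, and $|z|_{\mathcal H^{-\beta}}\le\lambda_1^{-\beta/2}|z|_{\mathcal H}$, this yields both estimates with the stated constants.
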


In view of the preceding analysis, we formulate the following spectral gap condition, which ensures the existence of an inertial manifold for \eqref{cut-off problem}.
\begin{proposition}	\label{main result}
Under the above assumptions, assume that there exists $N \in \mathbb{Z}^{+}$ such that the following spectral gap condition holds:
\begin{align}\label{spectral gap}
	\frac{\lambda_{N+1}^{1+\beta}-\lambda_{N}^{1+\beta}}{\lambda_{N+1}^{\beta}+\lambda_{N}^{\beta}}>
    \frac{L}{m}+\frac{12K\ell_a\rho\lambda_1^{-\beta/2}}{m^2},
\end{align}
 Then, problem \eqref{cut-off problem} possesses an $N$-dimensional inertial manifold.
\end{proposition}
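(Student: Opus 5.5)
The plan is to work entirely with the time-rescaled problem \eqref{changed cut-off problem}, which is semilinear with the fixed principal part $A^{1+\beta}$. Then we transfer the resulting inertial manifold back to \eqref{cut-off problem}. Set $\widetilde L:=\frac{L}{m}+\frac{12K\ell_a\rho\lambda_1^{-\beta/2}}{m^2}$. By the preceding Lemma, $\widetilde F$ is globally bounded by $K/m$ and globally Lipschitz on $\mathcal H$ with constant $\widetilde L$. Hence \eqref{changed cut-off problem} is exactly of the form \eqref{1.11} with $(K,L)$ replaced by $(K/m,\widetilde L)$, and \eqref{spectral gap} is precisely the classical gap condition \eqref{1.22} for it.

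First, we verify the strong cone condition for \eqref{changed cut-off problem} directly. Let $z_1,z_2$ be two solutions and set $w=z_1-z_2=p+q$, with $p=P_Nw$ and $q=Q_Nw$. Testing with $A^{-\beta}(q-p)$ gives
\begin{align*}
\frac12\frac{d}{d\tau}V(w)+|q|_{\mathcal H^1}^2-|p|_{\mathcal H^1}^2=(\widetilde F(z_1)-\widetilde F(z_2),q-p).
\end{align*}
Following \cite{li2024,Anna2015,Zelik}, we add and subtract $\mu V(w)$ with the intermediate choice $\mu=\frac{\lambda_{N+1}^{1+\beta}\lambda_N^{\beta}+\lambda_N^{1+\beta}\lambda_{N+1}^{\beta}}{\lambda_{N+1}^{\beta}+\lambda_N^{\beta}}$. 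We then bound the right-hand side by $\widetilde L|w|_{\mathcal H}|q-p|_{\mathcal H}$. The gap inequality \eqref{spectral gap}, applied mode by mode with $\lambda_n^{\beta}$ weights, absorbs this term. The result is $\frac{d}{d\tau}V(w)+\gamma V(w)\le -\varepsilon|w|^2_{\mathcal H^{1}}$ (or with the $\mathcal H^{-\beta}$ norm) for some $\gamma,\varepsilon>0$. This is the step I expect to be the main obstacle: the weights are not uniform across $\mathcal H^{1}$ and $\mathcal H^{-\beta}$, so the quadratic form must be diagonalized via $(\lambda_n^{1+\beta}-\mu)$ versus $\widetilde L\lambda_n^{\beta}$. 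I would simply cite the computation in \cite{li2024,Anna2015} rather than redo it.

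From the cone inequality, the standard argument yields the invariance of $K^+$ and the squeezing property in the $\tau$ time scale. Theorem \ref{abstract theorem} with $a\equiv1$ and $F$ replaced by $\widetilde F$ then produces an $N$-dimensional Lipschitz graph $\mathcal M=\{p+\Phi(p)\}$ that is invariant and exponentially tracking for the semigroup $\widetilde S(\tau)$ of \eqref{changed cut-off problem}. Alternatively, one may quote the classical result for \eqref{1.11}.

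Finally, we transfer $\mathcal M$ back. Since $m\le\tilde a\le M$, each map $t\mapsto\mathcal T_u(t)$ is an increasing bijection of $[0,\infty)$ satisfying $mt\le\mathcal T_u(t)\le Mt$. The orbit of $u_0$ under \eqref{cut-off problem} and under \eqref{changed cut-off problem} is therefore the same set. In particular, invariance of $\mathcal M$ under $\widetilde S(\tau)$ for all $\tau\ge0$ gives $\tilde S_a(t)\mathcal M=\mathcal M$, and the Lipschitz graph structure is unchanged. For tracking, take $v_0\in\mathcal M$ with $|\widetilde S(\tau)u_0-\widetilde S(\tau)v_0|\le Ce^{-\alpha\tau}$. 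A difficulty is that $u$ and $v$ use different clocks $\mathcal T_u$ and $\mathcal T_v$. To handle this, I would compare $|\mathcal T_u(t)-\mathcal T_v(t)|\le\ell_{\tilde a}\int_0^t|u-v|\,dr$ and use a bootstrap: the difference decays exponentially, so the clock mismatch stays bounded. Combined with the Lipschitz continuity in time of trajectories on the compact absorbing set, this gives $|u(t)-v(t)|\le C'e^{-\alpha m t}$. Should this bootstrap fail, I would instead choose $v_0$ by matching along the $u$-clock, i.e. use the tracking statement on the rescaled trajectory of $u$ and invariance of $\mathcal M$ to select the companion trajectory.
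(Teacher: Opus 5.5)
Your reduction to \eqref{changed cut-off problem}, the identification of \eqref{spectral gap} with \eqref{1.22} for $\widetilde F$, and the inclusion $S_a(t)\mathcal M\subset\mathcal M$ are fine; the paper simply quotes Proposition 4.1 of \cite{li2024} at this point. (Your $\mu$ is exactly the value balancing $\lambda_N^{1+\beta}+\widetilde L\lambda_N^{\beta}$ against $\lambda_{N+1}^{1+\beta}-\widetilde L\lambda_{N+1}^{\beta}$.) For $\mathcal M\subset S_a(t)\mathcal M$, ``the forward orbits coincide'' is not quite enough. You must run backward along a $\tau$-orbit in $\mathcal M$ ending at $u_0$, which exists since $S(\tau)\mathcal M=\mathcal M$. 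You stop when the accumulated physical time $\int 1/\tilde a(z)\,ds$ equals exactly $t$; this integral is continuous, increasing and at least $\tau/M$, so such a point exists.

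The genuine gap is exponential tracking in the physical time $t$. Let $\bar v_0\in\mathcal M$ track $u_0$ for $S(\tau)$. Then the clock difference $D(\tau)=t_{\bar v_0}(\tau)-t_{u_0}(\tau)$ satisfies $|D'(\tau)|\le\frac{\ell_{\tilde a}}{m^2}|z(\tau,\bar v_0)-z(\tau,u_0)|_{\mathcal H^{-\beta}}\le Ce^{-\delta\tau}$. So $D(\tau)\to D_\infty$ exponentially fast, but $D_\infty$ is in general nonzero, because it records the different clock speeds at early times. Keeping $v_0=\bar v_0$, $S_a(t)u_0$ is exponentially close to $S(\tau_{u_0}(t))\bar v_0$, while $S_a(t)\bar v_0=S(\tau_{\bar v_0}(t))\bar v_0$. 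The offset satisfies $|\tau_{u_0}(t)-\tau_{\bar v_0}(t)|\ge m|D(\tau_{u_0}(t))|\to m|D_\infty|$. Hence the error does not decay unless the limiting dynamics on $\mathcal M$ is stationary; think of a nonconstant periodic orbit in $\mathcal M$.

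Your bootstrap cannot repair this. A bounded clock mismatch plus continuity in time only gives a bounded error. Moreover, the estimate $|\mathcal T_u(t)-\mathcal T_v(t)|\le\ell_{\tilde a}\int_0^t|u-v|\,dr$ is circular. In physical time $|u-v|$ contains $|z(\mathcal T_u(t),\bar v_0)-z(\mathcal T_v(t),\bar v_0)|$, so the resulting Gronwall inequality only gives an exponentially growing bound.

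Your fallback points the right way but omits the idea that makes it work. You must replace $\bar v_0$ by its translate along its own orbit by the asymptotic phase $D_\infty$. If $D_\infty\ge0$, take $v_0=z(\theta,\bar v_0)$ with $t_{\bar v_0}(\theta)=D_\infty$. If $D_\infty<0$, take $v_0\in\mathcal M$ with $S_a(-D_\infty)v_0=\bar v_0$, using the backward invariance above. Then $S_a(t)v_0=S(\sigma(t))\bar v_0$ for $t\ge|D_\infty|$, where $t_{\bar v_0}(\sigma(t))=t+D_\infty$. This gives $|\sigma(t)-\tau_{u_0}(t)|\le M|D_\infty-D(\tau_{u_0}(t))|\le Ce^{-\delta mt}$.

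To convert this into $|S_a(t)u_0-S_a(t)v_0|_{\mathcal H^{-\beta}}\le Ce^{-\delta mt}$, you need a Lipschitz-in-time bound for the trajectory in $\mathcal H^{-\beta}$. On $\mathcal M$ it is immediate from the finite-dimensional ODE for $P_NS(\tau)\bar v_0$ and the Lipschitz graph $\Phi$. On the absorbing ball it is not available from the bounds of Section 2.1, which control $\mathcal H^{\gamma}$ only for $\gamma<2$.
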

\begin{proof}
It follows from Proposition 4.1 in \cite{li2024} that \eqref{changed cut-off problem} possesses an $N$-dimensional inertial manifold $\mathcal{M}$ associated with the semigroup $\{S(\tau):\tau\geq 0\}$. We next show that $\mathcal{M}$ is also an $N$-dimensional inertial manifold for the semigroup $\{S_a(t):t\geq 0\}$ associated with \eqref{cut-off problem}.
It is clear that $\mathcal{M}$ is a finite-dimensional Lipschitz manifold.
For any $u_0\in\mathcal{H}^{-\beta}$, let $u(\cdot,u_0)$ and $z(\cdot,u_0)$ denote the solutions to \eqref{cut-off problem} and \eqref{changed cut-off problem}, respectively. Define
\begin{align}
\tau_{u_0}(t):=\int_0^t \tilde a(u(r,u_0))dr, \quad t \geq 0,
\end{align}
\begin{align}
t_{u_0}(\tau):=\int_0^\tau \frac{1}{\tilde a(z(s,u_0))} ds,\quad \tau\geq0.
\end{align}
Thus, 
\begin{equation}\label{orbit relation}
S_a(t)u_0=S(\tau_{u_0}(t))u_0,
\quad
S(\tau)u_0=S_a(t_{u_0}(\tau))u_0.
\end{equation}
Since for any $s\geq0$, $\tilde a(s)\in [m,M]\subset(0,+\infty)$, we have
$$
m t \leq \tau_{u_0}(t) \leq M t, \quad \frac{\tau}{M} \leq t_{u_0}(\tau) \leq \frac{\tau}{m}.
$$
\medskip
\noindent\textbf{Step 1. Invariance of $\mathcal M$ under $S_a(t)$.}
It follows from \eqref{orbit relation} and 
$S(\tau)\mathcal M=\mathcal M$ for every $\tau\geq0$ that $S_a(t)\mathcal{M}\subset \mathcal{M}$ for all $t\geq0.$ 
We next prove that $\mathcal{M}\subset S_a(t)\mathcal{M}$ for all $t\geq0.$ 
Fix $t>0$ and $u_0\in\mathcal M$, and set $\tau_*:=Mt.$
Since $S(\tau_*)\mathcal M=\mathcal M$, there exists $z_*\in\mathcal M$ such that $S(\tau_*)z_*=u_0.$
For $0\leq r\leq\tau_*$, define
\[
\Theta(r):=
\int_{\tau_*-r}^{\tau_*}
\frac{1}{\tilde a(z(s,z_*))}\,ds.
\]
The function $\Theta$ is continuous and strictly increasing, $\Theta(0)=0$, and
\[
\Theta(\tau_*)
=\int_0^{\tau_*}\frac{1}{\tilde a(z(s,z_*))}\,ds
\geq\frac{\tau_*}{M}=t.
\]
Hence, there exists $r_*=r_*(t)\in[0,\tau_*]$ such that
$\Theta(r_*)=t.$
Set
\[
z_0:=S(\tau_*-r_*)z_*\in\mathcal M.
\]
Then,
$$
S(r_*)z_0=S(r_*)S(\tau_*-r_*)z_*=S(\tau_*)z_*=u_0,
$$
and 
\[
z(s,z_0)=z(s+\tau_*-r_*,z_*),\quad 0\leq s\leq r_*,
\]
Therefore,
\begin{align*}
t_{z_0}(r_*)
&=\int_0^{r_*}\frac{1}{\tilde a(z(s,z_0))}\,ds\\
&=\int_{0}^{r_*}\frac{1}{\tilde a(z(s+\tau_*-r_*,z_*))}\,ds\\
&=\int_{\tau_*-r_*}^{\tau_*}\frac{1}{\tilde a(z(s,z_*))}\,ds=t,
\end{align*}
which implies that $
S_a(t)z_0=S(r_*)z_0=u_0.$ Hence, the desired result follows.

\medskip
\noindent\textbf{Step 2. Exponential tracking under $S_a(t)$.}
Fix $u_0\in\mathcal H^{-\beta}$. Since $\mathcal M$ is an inertial manifold for $S(\tau)$, there exist $\bar v_0\in\mathcal M$, constants $C_0>0$ and $\delta>0$, such that
\begin{equation}\label{semi tracking}
|S(\tau)u_0-S(\tau)\bar v_0)|_{\mathcal H^{-\beta}}
\leq C_0e^{-\delta\tau},
\qquad \tau\geq0.
\end{equation}
Let $D(\tau):=t_{\bar v_0}(\tau)-t_{u_0}(\tau)$, it follows from \eqref{solution estimate} and \eqref{semi tracking} that
\begin{align*}
|D^{\prime}(\tau)|=|\frac{1}{\tilde a(z(\tau,\bar v_0))}-\frac{1}{\tilde a(z(\tau,u_0))}|\leq\frac{\ell_{\tilde a}}{m^2}C(u_0,\bar v_0)e^{-\delta\tau},\quad \tau\geq0,
\end{align*}
where $D^{\prime}(\tau)$ denotes the derivative of $D(\tau)$ with respect to $\tau.$ Therefore, the following limit exists,
\begin{equation}\label{D infinity}
D_\infty:=\lim_{\tau\to+\infty}D(\tau),
\end{equation}
and
\begin{align}\label{clock tail}
|D_\infty-D(\tau)|&\leq\int_\tau^\infty|\frac{1}{\tilde a(z(s,\bar v_0))}-\frac{1}{\tilde a(z(s,u_0))}|ds\nonumber\\ 
&\leq\frac{\ell_{\tilde a}}{\delta m^2}C(u_0,\bar v_0)e^{-\delta\tau},\quad \tau\geq0.
\end{align}
If $D_\infty\geq0$, the function $t_{\bar v_0}(\tau)$ is continuous, strictly increasing and maps $[0,\infty)$ onto $[0,\infty)$. Hence there exists a unique $\theta\geq0$ such that $t_{\bar v_0}(\theta)=D_\infty.$
Set
\[
v_0:=z(\theta,\bar v_0)\in\mathcal M.
\]
For $t\geq0$, let $\sigma(t)\geq\theta$ be determined by
$t_{\bar v_0}(\sigma(t))=t+D_\infty.$ Then, 
the inverse time transformation and the semigroup property give
\begin{align*}
S(\sigma(t))\bar v_0&=S_a(t+D_{\infty})\bar v_0\\
&=S_a(t)S_a(D_{\infty})\bar v_0\\
&=S_a(t)S(\theta)\bar v_0=S_a(t)v_0,\qquad t\geq0.
\end{align*}
If $D_\infty<0$, then, since $\mathcal{M}$ is invariant under $S_a(t)$, there exists $v_0\in\mathcal{M}$ such that
$S_a(-D_\infty)v_0=\bar v_0$. Thus, for every $t\geq-D_\infty$, the semigroup property gives
\[
S_a(t)v_0=S_a(t+D_{\infty})\bar v_0.
\]
Let $\sigma(t)\geq0$ be defined by $t_{\bar v_0}(\sigma(t))=t+D_\infty,$
then
\begin{equation}\label{physical tracking orbit negative}
S(\sigma(t))\bar v_0=S_a(t+D_{\infty})\bar v_0=S_a(t) v_0,\qquad t\geq -D_{\infty}.
\end{equation}
Hence, there exists $v_0\in\mathcal M$ such that, 
\begin{equation}\label{unified phase}
S_a(t)v_0=S(\sigma(t))\bar v_0, \quad t\geq |D_\infty|,
\end{equation}
where $t_{\bar v_0}(\sigma(t))=t+D_\infty.$ Moreover, set $\tau(t):=\tau_{u_0}(t),$ we obtain
\begin{align*}
t_{\bar v_0}(\sigma(t))-t_{\bar v_0}(\tau(t))
=t+D_\infty-t_{\bar v_0}(\tau(t))=D_\infty-D(\tau(t)).
\end{align*}
Since
\[
\frac{d}{d\tau}t_{\bar v_0}(\tau)
=\frac1{\tilde a(z(\tau,\bar v_0))}\geq\frac1M,
\]
thus, it follows from \eqref{clock tail} that
\begin{align}\label{time synchronization}
|\sigma(t)-\tau(t)|&\leq M|t_{\bar v_0}(\sigma(t))-t_{\bar v_0}(\tau(t))|\leq\frac{\ell_{\tilde a}M}{\delta m^2}C(u_0,\bar v_0)e^{-\delta\tau(t)},\quad t\geq |D_{\infty}|.
\end{align}
In addition, it is clear that there exists $\Phi:\mathcal{H}^{-\beta}_+\to\mathcal{H}^{-\beta}_-$ which is Lipschitz continuous with Lipschitz constant $\operatorname{Lip}\Phi$,
such that
$$
\mathcal{M}=\{p+\Phi(p):p\in\mathcal{H}^{-\beta}_+\}.
$$
Hence, on this manifold $\mathcal{M}$, the low-mode component $p(\tau):=P_NS(\tau)\bar v_0$ satisfies the finite-dimensional system
$$
\frac{dp}{d\tau}=-A^{1+\beta}p+P_NA^{\beta}\widetilde F(p+\Phi(p)).
$$
Therefore, it follows from \eqref{solution estimate} that 
$$
|\frac{dp}{d\tau}(\tau)|_{\mathcal{H}^{-\beta}}
\leq\lambda_N^{1+\beta}|p(\tau)|_{\mathcal{H}^{-\beta}}+
\lambda_N^{\frac{\beta}{2}}\frac{K}{m}\leq C(\bar v_0,m,\lambda_{N},K):=C_p,\quad \tau\geq0.
$$
It follows that, for any $\tau_1,\tau_2\geq0$,
$$
|p(\tau_1)-p(\tau_2)|_{\mathcal{H}^{-\beta}}\leq C_p|\tau_1-\tau_2|.
$$
By using the Lipschitz continuity of $\Phi$, we have
\begin{align}\label{continuous t}
|S(\tau_1)\bar v_0-S(\tau_2)\bar v_0|_{\mathcal{H}^{-\beta}}
&\leq
|p(\tau_1)-p(\tau_2)|_{\mathcal{H}^{-\beta}}
+|\Phi(p(\tau_1))-\Phi(p(\tau_2))|_{\mathcal{H}^{-\beta}}
\nonumber\\
&\leq(1+\operatorname{Lip}\Phi)
|p(\tau_1)-p(\tau_2)|_{\mathcal{H}^{-\beta}}\nonumber\\
&\leq(1+\operatorname{Lip}\Phi)
C_p|\tau_1-\tau_2|.
\end{align}
Combining \eqref{semi tracking}, \eqref{unified phase}, \eqref{time synchronization} and \eqref{continuous t},
we obtain, 
\begin{align*}
|S_a(t)u_0-S_a(t)v_0|_{\mathcal H^{-\beta}}
&=|S(\tau(t))u_0-S(\sigma(t))\bar v_0|_{\mathcal H^{-\beta}}\\
&\leq |S(\tau(t))u_0-S(\tau(t))\bar v_0|_{\mathcal H^{-\beta}}+|S(\tau(t))\bar v_0-S(\sigma(t))\bar v_0|_{\mathcal H^{-\beta}}\\
&\leq C_0e^{-\delta\tau(t)}+(1+\operatorname{Lip}\Phi)C_p\frac{\ell_{\tilde a}M}{\delta m^2}C(u_0,\bar v_0)e^{-\delta\tau(t)}\\
&\leq C_1e^{-\delta mt},\quad t\geq|D_{\infty}|.
\end{align*}
Moreover, it follows from \eqref{solution estimate} that
\begin{align*}
|S_a(t)u_0-S_a(t)v_0|_{\mathcal H^{-\beta}}
&\leq C_2\leq C_2e^{\delta m|D_\infty|}e^{-\delta mt},\quad 0\leq t\leq|D_{\infty}|.
\end{align*}
Hence, the desired result follows.
\end{proof}
\section{Applications to the 2D Nonlocal Parabolic Equations}

This section is devoted to applying the theory developed for the abstract model
\eqref{nonlocal problem} to two-dimensional nonlocal parabolic equations.

In what follows, we set
$\Omega=(0,\pi)^2$
and take $\mathcal H:=X:=L^2(\Omega).$
Let $A:D(A)\subset L^2(\Omega)\to L^2(\Omega)$
be the operator defined by $Aw=-\Delta w,$
with homogeneous Dirichlet boundary conditions. It is well known that $A$ is a
positive self-adjoint operator with compact inverse and $-A$ generates an analytic semigroup on $L^2(\Omega)$. For $k=(k_1,k_2)\in(\mathbb Z^+,\mathbb Z^+),$ set
$e_k(x_1,x_2):=\frac{2}{\pi}\sin(k_1x_1)\sin(k_2x_2),$
and $\lambda_k:=|k|^2=k_1^2+k_2^2.$
Then $Ae_k=\lambda_ke_k,$
and the family $\{\frac{2}{\pi}\sin(k_1x_1)\sin(k_2x_2):
k_1,k_2\in\mathbb Z^+\}$ forms a complete orthonormal basis of $L^2(\Omega)$. For convenience, we enumerate the eigenvalues of $A$, counted with multiplicity,
in nondecreasing order as
$$
0<\lambda_1\leq\lambda_2\leq\cdots,
\quad
\lambda_j\to+\infty
\quad\text{as }j\to\infty,
$$
and denote by $\{e_j\}_{j=1}^\infty$ the corresponding
$L^2(\Omega)$ orthonormal eigenfunctions. Thus,
$$
Ae_j=\lambda_je_j,
\quad
j\in\mathbb Z^{+}.
$$
Accordingly, every $u\in L^2(\Omega)$ admits the expansion
$$
u=\sum_{j=1}^\infty u_je_j,
\qquad
u_j=(u,e_j).
$$
where $\|\cdot\|$ denotes the usual norm in $L^{2}(\Omega)$. Let $X^s$, $s\geq0$, denote the fractional power spaces associated with $A$,
endowed with the inner products and norms
$$
(u,v)_s=(A^su,A^sv),
\qquad
\|u\|_s=\|A^su\|,
$$
respectively. Equivalently,
$$
\|u\|_s^2=\sum_{j=1}^\infty
\lambda_j^{2s}|u_j|^2.
$$
In particular, $X^0=L^2(\Omega),
X^{\frac12}=H_0^1(\Omega), X^1=D(A)=H^2(\Omega)\cap H_0^1(\Omega).$
Moreover, since $\Omega\subset\mathbb R^2$, Weyl's asymptotic formula gives $\lambda_j\sim c_\Omega j$ as $j\to\infty,$ where the eigenvalues are counted with multiplicity.

\subsection{Diffusion Coefficient Depending on the $L^2$-Norm of the Solution}
Let us consider the following 2D nonlocal equation:
\begin{equation}\label{2Dnonlocal reaction}
	\begin{cases}
		u_t=a(\Vert u\Vert^2)\Delta u+\lambda f(u) &\text{
			in} ~ \Omega\times\mathbb{R}^+,\\
		u(x,t)=0 & \text{
			on} ~ \partial\Omega\times\mathbb{R}^+, \\
		u(x,0)=u_0& \text{
			in} ~ \Omega, \\
	\end{cases}
\end{equation}
where $\|\cdot\|:=\|\cdot\|_0$ denotes the usual norm in $L^{2}(\Omega)$, $\lambda>0$ is a parameter, $a:\mathbb{R^+}\to [m,M]\subset (0,+\infty)$ is a continuously differentiable function and is globally Lipschitz with Lipschitz constant $\ell_a$. 
The function $f$ satisfies $f(0)=0,$ and there exist positive constants $\eta, k, \alpha_1,\alpha_2>0$ such that
\begin{equation}
	\left\{\begin{aligned}
  & f(s)\in C^2(\mathbb{R}),\\		&f^{\prime}(s)\leq \eta,\quad \text{for}\ s\in{\mathbb{R}},\\
		&-k-\alpha_1|s|^p \leq f(s) s \leq k-\alpha_2|s|^p, \quad \text{for } p\geq2, s\in\mathbb{R}. 
	\end{aligned}\right.
\end{equation} 

In order to establish the existence of an inertial manifold for \eqref{2Dnonlocal reaction}, we need the following lemma. The proof is standard and thus omitted, see e.g., \cite{Carvalho-Langa-Robinson-13, Chueshov, Robinson,Temam}.
\begin{lemma}\label{lemma3.1}
	 Let the operator $A$, the functions $a(s)$ and $f(s)$ satisfy the above assumptions. Then for every $u_0 \in X$, problem \eqref{2Dnonlocal reaction} possesses a unique solution $u \in C([0, T] ; X) \cap L^2(0, T ; X^{\frac12})$ for all $T>0$. 
Moreover, the solution depends Lipschitz continuously on the initial data. In addition, problem \eqref{2Dnonlocal reaction} admits an absorbing ball $\mathcal{B}_{\rho_1}=:\{u\in {X}^{\frac12}; \|u\|_{\frac12}\leq\rho_1\}$, where $\rho_1>0$ is a constant. 
\end{lemma}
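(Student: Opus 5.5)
The plan is to follow the route of Propositions~\ref{well poseness} and~\ref{global attractor}, now in the concrete setting $\mathcal H=X=L^2(\Omega)$, $\beta=0$, $A=-\Delta$ with Dirichlet conditions, and $F(u)=\lambda f(u)$. Here $f$ is not globally bounded, so the abstract results cannot be quoted verbatim. The monotonicity bound $f'\le\eta$ and the dissipativity condition $f(s)s\le k-\alpha_2|s|^p$ will take the place of the boundedness of $F$.

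\textbf{Existence.} I will run the Galerkin scheme of Proposition~\ref{well poseness} on the approximations $u_n\in V_n$. Testing with $u_n$ and using $f(s)s\le k-\alpha_2|s|^p$ gives
\[
\tfrac12\tfrac{d}{dt}\|u_n\|^2+m\|u_n\|_{\frac12}^2+\lambda\alpha_2\|u_n\|_{L^p}^p\le \lambda k|\Omega|.
\]
This bounds $u_n$ uniformly in $L^\infty(0,T;X)\cap L^2(0,T;X^{\frac12})\cap L^p(0,T;L^p)$. The growth bound $|f(s)s|\le k+\alpha_1|s|^p$, together with $f\in C^2$, suggests $|f(s)|\le C(1+|s|^{p-1})$. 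In that case $f(u_n)$ is bounded in $L^{p'}(0,T;L^{p'})$ and $\partial_t u_n$ is bounded in $L^2(H^{-1})+L^{p'}(L^{p'})$. Aubin--Lions then gives strong convergence in $L^2(0,T;X)$. From this, $f(u_n)\to f(u)$ a.e., and $a(\|u_n\|^2)\to a(\|u\|^2)$ in $L^2(0,T)$ by continuity of $a$. The initial condition and continuity in time are recovered exactly as before.

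\textbf{Uniqueness and Lipschitz dependence.} For $w=u-v$ I test with $w$. The reaction term is handled by the one-sided bound $(f(u)-f(v),w)\le\eta\|w\|^2$, which avoids any growth condition. The nonlocal term is handled exactly as in \eqref{lipschitz continuous}, namely
\[
|a(\|u\|^2)-a(\|v\|^2)|\,|(\nabla v,\nabla w)|\le \ell_a(\|u\|+\|v\|)\|w\|\,\|v\|_{\frac12}\|w\|_{\frac12}.
\]
Young's inequality absorbs $\|w\|_{\frac12}^2$ into $m\|w\|_{\frac12}^2$ and leaves the factor $\|v\|_{\frac12}^2$, which is integrable in time. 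Gronwall's inequality then gives Lipschitz dependence in $X$, and uniqueness follows.

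\textbf{Absorbing ball in $X^{\frac12}$.} First, $\|u\|\ge 0$ and Poincar\'e's inequality applied to the energy inequality above give an absorbing ball in $X$ together with a bound on $\int_t^{t+1}\|u\|_{\frac12}^2$. For the $X^{\frac12}$ bound I test with $-\Delta u$. This yields
\[
\tfrac12\tfrac{d}{dt}\|u\|_{\frac12}^2+m\|\Delta u\|^2=-\lambda\int f'(u)|\nabla u|^2\le\lambda\eta\|u\|_{\frac12}^2,
\]
where the integration by parts is justified because $f(0)=0$ and $u=0$ on $\partial\Omega$. The uniform Gronwall lemma combined with the integral bound then gives $\|u(t)\|_{\frac12}\le\rho_1$ for large $t$.

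\textbf{Main obstacle.} The delicate step is the compactness and identification of the limit in the existence part when $p$ is large. This requires the growth bound on $f$ used above, or else a truncation argument with Galerkin estimates in $X^{\frac12}$. I would first try the $X^{\frac12}$ Galerkin estimate, which needs data in $X^{\frac12}$, and then extend to $u_0\in X$ by density using the Lipschitz estimate.
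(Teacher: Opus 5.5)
Your proposal is correct and is essentially the standard Galerkin, energy-estimate and uniform-Gronwall argument that the paper invokes without writing out (it omits the proof and cites Robinson, Temam, Chueshov, and Carvalho--Langa--Robinson); you treat the nonlocal term exactly as in Proposition~\ref{well poseness}, and the reaction term via $f'\le\eta$ and the dissipativity condition. Two small remarks: the growth bound $|f(s)|\le C(1+|s|^{p-1})$ actually follows from $|f(s)s|\le k+\alpha_1|s|^p$ together with continuity of $f$ on $[-1,1]$, so your ``main obstacle'' is not really an obstacle; and the correct identity is $(f(u),-\Delta u)=+\int_\Omega f'(u)|\nabla u|^2$, so your displayed minus sign is a typo, although the bound $\le\lambda\eta\|u\|_{\frac12}^2$ you draw from it is right.
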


Let $S_a(t)$ denote the corresponding semigroup given by $u(t):=S_a(t)u_0$ for $t\geq0.$ From the Lemma \ref{lemma3.1}, the semigroup $S_a(t):X\to X$ has a global attractor $\mathcal{A}\subset {{X}^{\frac12}}$ in the phase space $X$.
Following the approach of Section 11.2.1 in
\cite{Robinson}, we conclude that the global attractor $\mathcal{A}$ is bounded in $L^{\infty}(\Omega)$. Moreover, we consider the following set:
$$
\mathcal{B}_0:=\left\{u \in X;\|u\| \leq \rho_0,\|u\|_{\infty} \leq \rho_{\infty}\right\},
$$
where $\rho_0$ and $\rho_{\infty}$ are bounds of $\mathcal{A}$ in $X$ and $L^{\infty}(\Omega)$, respectively. Proceeding as Section 15.4.1 in \cite{Robinson}, set 
$$
g(s):=\theta_{\rho_{\infty}}(|s|)\lambda f(s),\quad G(u)(x):=g(u(x)), \quad F(u):=\theta_{\rho_{0}}(\|u\|)G(u),
$$
where the cut-off function $\theta(s)$ is defined as in \eqref{2.551}.
It is clear that $F(\cdot):X\to X$ is now globally bounded and globally Lipschitz continuous, see Lemma 15.7 in \cite{Robinson} for more details.

In view of the above, we observe that the modified version of problem \eqref{2Dnonlocal reaction} fits the abstract model \eqref{nonlocal problem} with $\beta=0$, 
$\mathcal{H}=L^{2}(\Omega),$ and $A=-\Delta:
D(A)\subset L^2(\Omega) \rightarrow L^2(\Omega)$ with homogeneous Dirichlet boundary conditions. Thus, the desired model is constructed. As in the previous section, we now consider the following modified problem:
\begin{equation}\label{modified problem}
	\left\{\begin{aligned}
		&\partial_t u-\tilde a(u)\Delta u={F}(u),\ t>0,\\
		&u(0)=u_0,
	\end{aligned}\right.
\end{equation} 
where $\tilde{a}(u)=a(\theta_{\rho_0}(\|u\|)\|u\|^2).$
It remains to show that the spectral gap condition \eqref{spectral gap} holds, which is guaranteed by the following classical result from number theory.
The proof can be found in \cite{Richards}.

\begin{lemma}\label{two square}
	The sequence $\left\{s_n=k_1^2+k_2^2: k_1, k_2 \in \mathbb{Z}\right.$ and $\left.s_{n+1} \geq s_n\right\}$ satisfies
	$$
	\limsup _{n \rightarrow \infty} \frac{s_{n+1}-s_n}{\log s_n} \geq \delta,
	$$
	for some $\delta>0$.
\end{lemma}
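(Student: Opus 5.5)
The plan is to produce, for infinitely many large $X$, an interval $(x,x+h]$ with $x\le X$ and $h\geq c\log X$ containing no integer of the form $k_1^2+k_2^2$. The condition $s_{n+1}-s_n\geq h$ for the largest $s_n\le x$ then gives the $\limsup$ bound with $\delta=c$.

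The tool is the classical characterization: $n>0$ is a sum of two squares if and only if every prime $p\equiv3\pmod 4$ divides $n$ to an even power. To exclude a given $n$ it therefore suffices to exhibit one prime $p\equiv 3\pmod 4$ with $v_p(n)$ odd. I will build $x$ by the Chinese remainder theorem so that each $x+j$, $1\le j\le h$, has such a witness prime.

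\textbf{Step 1 (cheap exclusion inherited from $j$).} Let $Q=\prod_{p\le h,\ p\equiv 3 (4)}p^{2\lfloor\log_p h\rfloor+2}$ and require $x\equiv0\pmod Q$. Take any $j\le h$ that is not a sum of two squares, and let $p\equiv3\pmod 4$ be a prime with $v_p(j)$ odd. Since $p^{v_p(j)+1}\mid Q$ divides $x$, we get $v_p(x+j)=v_p(j)$, so $x+j$ is excluded. The cost is $\log Q\ll h$. By Landau's theorem, the surviving indices $J=\{j\le h: j \text{ is a sum of two squares}\}$ number only $\#J\asymp h/\sqrt{\log h}$.

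\textbf{Step 2 (individual exclusion of the survivors).} For each $j\in J$, pick a distinct prime $q_j\equiv 3\pmod 4$ with $q_j>h$, and impose $x\equiv q_j-j\pmod{q_j^2}$. Then $v_{q_j}(x+j)=1$, and these congruences are compatible with Step 1 and with each other. The total modulus has logarithm $\ll h+\sum_{j\in J}\log q_j$.

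\textbf{Main obstacle: the size of $x$.} The point is to make $x$, the least positive solution, satisfy $\log x\ll h$. Step 2 is the expensive step. Using the smallest available primes $q_j$, of size about $h$, the cost is $\#J\log h\asymp h\sqrt{\log h}$. This yields only $h\gg \log x/\sqrt{\log\log x}$, which is not enough.

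To remove the extra factor I would first try to let the witness primes $q$ serve many $j$ at once. Choose $x\equiv -j_0 + q\pmod{q^2}$, so that every $j\equiv j_0\pmod q$ in $J$ has $x+j\equiv q+(j-j_0)\pmod{q^2}$. This equals $q$ times a non-multiple of $q$ unless $j-j_0\equiv -q\pmod{q^2}$. So a single prime $q\le h$, $q\equiv 3\pmod 4$, covers about $\#J/q$ survivors at a cost of only $2\log q$.

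Running a greedy choice of the residue classes over primes $q\in(h^{\varepsilon},h]$ should reduce the uncovered set to size $o(h/\log h)$, while the primes consumed contribute only $O(h)$ to $\log x$. The few leftovers are then handled by Step 2 with distinct large primes at total cost $O(h)$. If the greedy count turns out too weak, I would replace it by an averaging argument over the classes $j_0\bmod q$, in the Erdős--Rankin style, exploiting that $J$ is a sparse set of density $1/\sqrt{\log h}$.

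This refined covering count is the delicate step. Everything else (the characterization of sums of two squares, the Chinese remainder theorem, and Landau's density) is standard.
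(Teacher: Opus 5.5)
Your Steps 1 and 2 are correct, but as you observe they only give $h\gg\log x/\sqrt{\log\log x}$, so everything rests on the covering step, and as sketched it fails.

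First, the covering step contradicts Step 1. Every prime $q\le h$ with $q\equiv 3\pmod 4$ is already pinned by $x\equiv 0\pmod Q$. In that class $q$ witnesses $x+j$ only when $v_q(j)$ is odd, which never happens for $j\in J$. Switching to $x\equiv q-j_0\pmod{q^2}$ releases the non-sums whose only witness was $q$ (such as $j=q,2q,5q,\dots$), and these must then be re-covered.

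Second, even ignoring this conflict, choosing one class per prime $q\in(h^{\varepsilon},h]$, $q\equiv 3\pmod 4$, by averaging guarantees only the removal of a proportion $\approx 1/q$ of the current survivors. What remains is about $\#J\prod_{h^{\varepsilon}<q\le h,\ q\equiv 3\ (4)}(1-1/q)\asymp\sqrt{\varepsilon}\,h/\sqrt{\log h}$. Heuristically greedy cannot do much better, since $J$ is essentially equidistributed over the nonzero classes mod $q$. The $\asymp h/\log h$ primes near $h$ remove only about one element each. This is nowhere near $o(h/\log h)$, and nothing in your sketch supplies the extra sparsity that Erd\H{o}s--Rankin gets from smooth numbers. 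Step 2 applied to the leftovers again costs $\gg h\sqrt{\log h}$.

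The paper gives no proof of its own and only cites Richards. The missing idea is to place the interval just \emph{below} a highly divisible number, so that each $x+j$ is locally a \emph{negative} integer. A negative integer fails to be a sum of two squares, and this failure is detected either at $2$ or at a prime $p\equiv 3\pmod 4$ with $p\le h$.

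Concretely, take $Q=2^{e}\prod_{p\le h,\ p\equiv 3\ (4)}p^{e_p}$ with $e=\lfloor\log_2 h\rfloor+2$ and $e_p=\lfloor\log_p h\rfloor+1$, and consider $Q-m$ for $1\le m\le h$.
\begin{itemize}
\item If $v_p(m)$ is odd for some $p\equiv 3\pmod 4$, then $p\le h$ and $v_p(Q-m)=v_p(m)$ is odd.
\item Otherwise $m=2^a m'$ with $m'\equiv 1\pmod 4$. Then $Q-m=2^a n'$ with $n'\equiv -m'\equiv 3\pmod 4$, so $n'$ contains a prime $\equiv 3\pmod 4$ to an odd power.
\end{itemize}
Thus $Q-h,\dots,Q-1$ are all non-sums of two squares. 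By Chebyshev, $\log Q\le 2\pi(h)\log h+O(\log h)\ll h$, so there is a gap of length $>h\gg\log Q$ below height $Q$, which is the lemma.

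In your notation: replace $x\equiv 0$ by $x\equiv -h\pmod Q$ and include the power of $2$ in $Q$. Then $J$ is empty, and both Step 2 and the covering step become unnecessary.
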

Up to now, all conditions in Proposition \ref{main result} are fulfilled. As a consequence, we have the following result.
\begin{theorem} \label{main}
For some sufficiently large $N,$ the modified problem \eqref{modified problem} possesses an $N$-dimensional
inertial manifold with $\beta=0$, $\mathcal{H}=L^{2}(\Omega)$ and $A=-\Delta:D(A)\subset L^2(\Omega) \rightarrow L^2(\Omega)$ with homogeneous Dirichlet boundary conditions.   
\end{theorem}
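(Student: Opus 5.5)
The plan is to apply Proposition \ref{main result} with $\beta=0$, $\mathcal H=L^2(\Omega)$ and $A=-\Delta$ with Dirichlet conditions on $\Omega=(0,\pi)^2$. Here the modified nonlinearity $F(u)=\theta_{\rho_0}(\|u\|)G(u)$ plays the role of the abstract $F$. I would proceed in three steps: check that \eqref{modified problem} fits the hypotheses of the abstract framework, specialize the spectral gap condition \eqref{spectral gap}, and then use Lemma \ref{two square} to find infinitely many $N$ satisfying it.

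\textbf{Step 1: hypotheses.} By Lemma 15.7 in \cite{Robinson}, $F:X\to X$ is globally bounded and globally Lipschitz. Call the bound $K$ and the Lipschitz constant $L$; both depend on $\lambda,\eta,\rho_0,\rho_\infty$ and $f$, but not on $N$. The coefficient function $a$ takes values in $[m,M]$ and is $C^1$ and globally Lipschitz with constant $\ell_a$. With $\beta=0$ the cut-off coefficient $\tilde a(u)=a(\theta_{\rho_0}(\|u\|)\|u\|^2)$ is exactly the $\tilde a$ of Section 2.3 with $\rho=\rho_0$. One point needs checking: $\rho_0$ must be the radius of an absorbing ball for \eqref{modified problem} in $\mathcal H^{-\beta}=X$. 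If the attractor bound $\rho_0$ is not itself absorbing, I would enlarge $\rho_0$ slightly, for instance to the radius given by Proposition \ref{global attractor} for the problem with nonlinearity $F$. This keeps all constants independent of $N$.

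\textbf{Step 2: the reduced gap condition.} For $\beta=0$, condition \eqref{spectral gap} becomes
$$\frac{\lambda_{N+1}-\lambda_N}{2}>\frac{L}{m}+\frac{12K\ell_a\rho_0}{m^2}=:\Lambda,$$
where $\Lambda$ is a fixed constant independent of $N$. So it is enough to produce a single $N$ with $\lambda_{N+1}-\lambda_N>2\Lambda$.

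\textbf{Step 3: number theory, the main point.} The eigenvalues, counted with multiplicity, are the numbers $k_1^2+k_2^2$ with $k_1,k_2\ge1$. Consecutive distinct eigenvalues are therefore consecutive distinct values in this set. Lemma \ref{two square} is stated for $k_1,k_2\in\mathbb Z$, so I must transfer it to positive integers. The set of values with $k_i\in\mathbb Z$ is the set of positive-index values together with the squares $k^2$, which come from $k_1=0$ or $k_2=0$. Adding points to a set can only shrink the gaps between its consecutive elements. Hence a gap $s_{n+1}-s_n$ in the larger sequence (using distinct values, since zero gaps are irrelevant to the $\limsup$) is bounded above by the gap containing it in the positive-index sequence. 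By Lemma \ref{two square}, there are infinitely many $n$ with $s_{n+1}-s_n\ge\frac{\delta}{2}\log s_n\to\infty$. Each such gap lies inside a gap of the Dirichlet spectrum of at least the same length.

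Concretely, take such an $n$ with $\frac{\delta}{2}\log s_n>2\Lambda$. Let $N$ be the largest index with $\lambda_N\le s_n$. Then $\lambda_{N+1}\ge s_{n+1}$, so $\lambda_{N+1}-\lambda_N\ge s_{n+1}-s_n>2\Lambda$. Proposition \ref{main result} then gives an $N$-dimensional inertial manifold, and $N$ can be taken arbitrarily large.

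I expect the only delicate points to be the comparison between the two lattice-value sets in Step 3 and confirming in Step 1 that the constants $K$, $L$ and $\rho_0$ are fixed before $N$ is chosen. Both look routine.
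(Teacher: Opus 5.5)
Your proposal is correct and follows the paper's route. You apply Proposition \ref{main result} with $\beta=0$, so the gap condition reduces to $\lambda_{N+1}-\lambda_N>2\bigl(\frac{L}{m}+\frac{12K\ell_a\rho_0}{m^2}\bigr)$ with $N$-independent constants, and you get arbitrarily large gaps from Lemma \ref{two square}; your superset argument transferring Richards' result from $k_1,k_2\in\mathbb Z$ to the Dirichlet spectrum with $k_1,k_2\geq1$ correctly supplies a step the paper leaves implicit. The proposed enlargement of $\rho_0$ is unnecessary and would in fact change the problem \eqref{modified problem}, because the proof of Proposition \ref{main result} uses only $\tilde a\in[m,M]$ and the Lipschitz bound $12\ell_a\rho_0$, never that the ball of radius $\rho_0$ is absorbing.
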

\subsection{Diffusion Coefficient Depending on the $L 
^2$-Norm of the Gradient}

We next apply the abstract result to the following 2D nonlocal parabolic equation:
\begin{equation}\label{2DKir}
	\begin{cases}
		u_t=a(\Vert\nabla u\Vert^2)\Delta u+\lambda f(u) &\text{
			in} ~ \Omega\times\mathbb{R}^+,\\
		u(x,t)=0 & \text{
			on} ~ \partial\Omega\times\mathbb{R}^+, \\
		u(x,0)=u_0& \text{
			in} ~ \Omega, \\
	\end{cases}
\end{equation}
where $u_0 \in X^{\frac12}$, and the functions $a(s)$ and $f(s)$ satisfy the same assumptions as above. It is well known that problem \eqref{2DKir} possesses a global attractor $\mathcal{A}_1 \subset X^{\frac12}$, see \cite{Estefani,valeroJDDE} for more details. In addition, arguing as above, one can verify that $\mathcal{A}_1$ is bounded in $L^{\infty}(\Omega)$. However, the cut-off function introduced in Section~3.1 is no longer sufficient, since it does not ensure that the associated Nemytskii operator is globally Lipschitz from $X^{\frac12}$ into itself. This is because
$H_0^1(\Omega)\not\hookrightarrow L^\infty(\Omega)$
in two dimensions. To overcome this difficulty, we use the cut-off function introduced in \cite{ANA2018} to construct a suitable cut-off operator whose range is uniformly bounded in a function space with slightly higher regularity than $X^{\frac12}$. To do this, we need the following lemma.
\begin{lemma}\label{Kir-high-regularity}
Under the above assumptions, the global attractor $\mathcal A_1$ is bounded in $X^\alpha$, with
$\frac12<\alpha<\frac32.$
\end{lemma}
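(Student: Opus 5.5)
The plan is to bootstrap regularity on the attractor using the variation of constants formula of Remark \ref{remark2.2}, specialized to $\beta=0$ and $A=-\Delta$ with Dirichlet conditions. For a complete bounded trajectory $u(t)\subset\mathcal A_1$ we write
\begin{align*}
u(t)=U_{u(t-1)}(1)u(t-1)+\lambda\int_{t-1}^{t}U_{u(s)}(t-s)f(u(s))\,ds,
\end{align*}
where $U$ is the linear semigroup $e^{-A\int a\,dr}$ with integrated diffusion coefficient lying in $[m(t-s),M(t-s)]$.

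The basic estimate is that for $0\le\gamma\le\alpha$ one has
\begin{align*}
\|U(t-s)\|_{\mathcal L(X^{\gamma},X^{\alpha})}\le C(m)(t-s)^{-(\alpha-\gamma)},
\end{align*}
uniformly in the trajectory. This follows from the spectral representation and the lower bound $m$ on the coefficient, exactly as in \eqref{22.25}. With it, the first term is bounded in $X^\alpha$ by $C(m)\|u(t-1)\|$, which is uniformly bounded on $\mathcal A_1$ because $\mathcal A_1$ is bounded in $X^{\frac12}$.

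For the integral term we use that $\mathcal A_1$ is bounded in $L^\infty(\Omega)$, as stated before the lemma. Since $f\in C^2$, the function $f(u(s))$ is bounded in $L^\infty$, hence in $L^2=X^0$, uniformly on the attractor. Choosing $\gamma=0$ gives the singularity $(t-s)^{-\alpha}$, which is integrable only when $\alpha<1$. So this step yields boundedness in $X^\alpha$ for every $\alpha<1$.

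To reach $\frac32>\alpha\ge1$ we bootstrap once more, and we expect this to be the main obstacle. We need $f(u)$ bounded in $X^{\gamma}$ for some $\gamma>\alpha-1$, where $\gamma$ may be any value below $\frac12$. The plan is to take $\gamma$ close to $\frac12$, i.e.\ to bound $f(u)$ in $H^{2\gamma}_0$-type spaces. First, the previous step together with $\alpha'>\frac12$ gives $u\in H^{2\alpha'}\hookrightarrow W^{1,q}$ for large $q$, and in particular $u\in L^\infty$. Second, the chain rule gives $\nabla f(u)=f'(u)\nabla u$, bounded in $L^2$ since $f'$ is bounded on bounded sets. Third, the boundary condition is handled by $f(0)=0$, so $f(u)\in H^1_0=X^{\frac12}\subset X^{\gamma}$ for $\gamma<\frac12$. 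Then $\|U(t-s)\|_{\mathcal L(X^\gamma,X^\alpha)}\sim(t-s)^{-(\alpha-\gamma)}$ is integrable whenever $\alpha<\gamma+1$, i.e.\ for all $\alpha<\frac32$ once $\gamma$ is taken close to $\frac12$. Indeed $f(u)\in X^{\frac12}$ would even allow $\alpha<\frac32$ directly.

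All constants depend only on $m$, $\lambda$, $\rho_\infty$, $\rho_1$ and $\alpha$, and not on the trajectory, so $\sup_{u\in\mathcal A_1}\|u\|_\alpha<\infty$. The upper endpoint $\frac32$ is excluded because $f(u)\in X^{\gamma}$ with $\gamma\ge\frac12$ would require compatibility conditions beyond $f(0)=0$ together with more regularity than $H^1_0$ provides.
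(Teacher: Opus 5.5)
Your proposal is correct and follows essentially the paper's argument. Both use the variation of constants formula on $[t-1,t]$, the smoothing bound $\|e^{-A\int_s^t a\,dr}\|_{\mathcal L(X^{\gamma},X^{\alpha})}\le C(m,\alpha)(t-s)^{-(\alpha-\gamma)}$, and the uniform $X^{\frac12}$-bound on $f(u)$ over $\mathcal A_1$ obtained from $\nabla f(u)=f'(u)\nabla u$, $f(0)=0$ and the $L^\infty$ bound, which makes $(t-s)^{-(\alpha-\frac12)}$ integrable for every $\alpha<\frac32$. Your preliminary bootstrap to $\alpha<1$ is unnecessary, as you yourself observe, since the $X^{\frac12}$-bound on $f(u)$ needs only $u\in H_0^1\cap L^\infty$; the paper goes directly to the final estimate.
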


\begin{proof}
Let $u:\mathbb R\to\mathcal A_1$ be a complete trajectory. Since $\mathcal A_1$ is bounded in $X^{\frac12}=H_0^1(\Omega)$ and in $L^\infty(\Omega)$, there exists a constant $R>0$ such that
$$
\sup_{t\in\mathbb R}
(\|u(t)\|_{\frac12}+
\|u(t)\|_{L^\infty(\Omega)})
\leq R.
$$
Since $f(0)=0$ and $f\in C^2(\mathbb R)$, we have
$f(u(s))\in X^{\frac12}.$
Consequently, there exists a constant $C_1>0$, independent of $s\in\mathbb R$, such that
\begin{equation}\label{f-H12-bound}
\|f(u(s))\|_{\frac12}\leq C_1,
\qquad s\in\mathbb R.
\end{equation}
It follows from Remark \ref{remark2.2} that
\begin{align}\label{formula}
u(t)=e^{-A\int_{t-1}^t a(|\nabla u(r)|^2)dr}u(t-1)
+\lambda\int_{t-1}^te^{-A\int_s^t a(|\nabla u(r)|^2)dr}
f(u(s))ds,\quad t\in\mathbb R.
\end{align}
Using the standard smoothing estimate for the analytic semigroup generated by $-A$, for every
$\frac12\leq\alpha<\frac32$,
we obtain
$$
\|e^{-A\int_s^t a(\|\nabla u(r)\|^2)dr}v\|_\alpha
\leq C(m,\alpha) (t-s)^{-(\alpha-\frac12)}
\|v\|_{\frac12},\quad \text{for } v\in X^{\frac12}.
$$
Hence, by \eqref{formula},
\begin{align*}
\|u(t)\|_\alpha &\leq
C(m,\alpha)\|u(t-1)\|_{\frac12} +C(m,\alpha,\lambda)
\int_{t-1}^t(t-s)^{-(\alpha-\frac12)}
\|f(u(s))\|_{\frac12}ds\\
&\leq C(m,\alpha)R+C(m,\alpha,\lambda)C_1.
\end{align*}
Therefore, there exists a constant $\rho_{\alpha}>0$, independent of $t\in\mathbb R$ and the complete trajectory $u$, such that
\begin{align}\label{rhoalpaha}
\|u(t)\|_\alpha\leq \rho_{\alpha},
\quad t\in\mathbb R.
\end{align}
Hence, the desired result follows.
\end{proof}
Choose $\eta\in C^\infty(\mathbb R)$ such that
\[
 \eta(s)=s\quad\text{for }|s|\leq1,
 \qquad |\eta(s)|\leq2,
 \qquad \sup_{s\in\mathbb R}\bigl(|\eta'(s)|+|\eta''(s)|\bigr)<\eta_1.
\]
Fix $1<\alpha<\frac32,$ for any $u\in X^{\frac12}$,
$$
u=\sum_{j=1}^{\infty}u_je_j,
\qquad
u_j=(u,e_j),
$$
define the map $\mathcal W:X^{\frac12}\to X^{\frac12}$ by
\begin{equation}\label{preparation-map}
 \mathcal W(u):=\sum_{j=1}^{\infty}
 \rho_{\alpha}\lambda_j^{-\alpha}
 \eta(\frac{\lambda_j^{\alpha}u_j}{\rho_{\alpha}})e_j,
\end{equation}
where $\rho_\alpha$ is the same constant as in \eqref{rhoalpaha}.
\begin{lemma}\label{preparation-map-lemma}
The map $\mathcal W$ has the following properties:
\begin{enumerate}[(i)]
 \item $\mathcal W(u)=u$ for every $u\in\mathcal A_1$.
 \item $\mathcal W:X^{\frac12}\to X^{\frac12}$ is globally Lipschitz.
 \item There exist $s>\frac12$ and $p>2$ such that
 \begin{equation}\label{W-uniform-regularity}
 \sup_{u\in X^{\frac12}}
 \left(\|\mathcal W(u)\|_s
 +\|\mathcal W(u)\|_{L^\infty(\Omega)}
 +\|\nabla\mathcal W(u)\|_{L^p(\Omega)}\right)<\infty.
 \end{equation}
 \item $\mathcal W$ is Gateaux differentiable on $X^{\frac12}$ and
 \begin{equation}\label{W-derivative}
 \mathcal W'(u)h
 =\sum_{j=1}^{\infty}
 \eta'(\frac{\lambda_j^{\alpha}u_j}{\rho_{\alpha}})h_je_j,
 \qquad
 \sup_{u\in X^{\frac12}}
 \|\mathcal W'(u)\|_{\mathcal L(X^{\frac12})}<\infty.
 \end{equation}
\end{enumerate}
\end{lemma}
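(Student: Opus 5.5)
The map $\mathcal W$ acts diagonally in the eigenbasis, so I will check every property coefficient by coefficient. The two facts used throughout are $|\eta(r)|\le\min(2,\eta_1|r|)$, which follows from $\eta(0)=0$ and $|\eta'|<\eta_1$, and $\lambda_j\sim c_\Omega j$ from Weyl's formula.

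\textbf{(i)} By Lemma \ref{Kir-high-regularity} and \eqref{rhoalpaha}, every $u\in\mathcal A_1$ satisfies $\|u\|_\alpha\le\rho_\alpha$. Hence $\lambda_j^{\alpha}|u_j|\le\rho_\alpha$ for every $j$. Since $\eta(s)=s$ for $|s|\le1$, each coefficient is unchanged, and $\mathcal W(u)=u$.

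\textbf{(ii) and (iv)} Since $|\eta'|<\eta_1$, the mean value theorem gives
\[
\rho_\alpha\lambda_j^{-\alpha}\bigl|\eta(\lambda_j^\alpha u_j/\rho_\alpha)-\eta(\lambda_j^\alpha v_j/\rho_\alpha)\bigr|\le\eta_1|u_j-v_j|.
\]
Multiplying by $\lambda_j^{1/2}$, squaring and summing yields $\|\mathcal W(u)-\mathcal W(v)\|_{\frac12}\le\eta_1\|u-v\|_{\frac12}$. The same bound with $v=0$ shows that $\mathcal W$ maps $X^{\frac12}$ into itself.

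For Gateaux differentiability, fix $u,h$ and consider the difference quotient $\bigl(\mathcal W(u+\epsilon h)-\mathcal W(u)\bigr)/\epsilon$. Its $j$-th coefficient is $\eta'(\xi_j)h_j$ for some intermediate point $\xi_j$. This converges to $\eta'(\lambda_j^\alpha u_j/\rho_\alpha)h_j$ by continuity of $\eta'$, and it is dominated by $\eta_1|h_j|$. Dominated convergence in the weighted $\ell^2$ sum with weights $\lambda_j$ then gives \eqref{W-derivative}, together with $\|\mathcal W'(u)\|_{\mathcal L(X^{\frac12})}\le\eta_1$.

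\textbf{(iii)} The uniform regularity estimate is the main step. Set $w_j$ to be the coefficients of $\mathcal W(u)$. Since $|\eta|\le2$, we have $|w_j|\le2\rho_\alpha\lambda_j^{-\alpha}$ uniformly in $u$. Consequently
\[
\|\mathcal W(u)\|_s^2\le4\rho_\alpha^2\sum_j\lambda_j^{2s-2\alpha}.
\]
By Weyl's law this sum is finite as soon as $2\alpha-2s>1$, i.e.\ $s<\alpha-\frac12$. Because $\alpha>1$, I can pick any $s\in(\frac12,\alpha-\frac12)$.

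In two dimensions, $X^s=D(A^s)\hookrightarrow H^{2s}(\Omega)$ with $2s>1$. The Sobolev embedding $H^{2s}(\Omega)\hookrightarrow W^{1,p}(\Omega)$ holds for $p=2/(2-2s)>2$, and $H^{2s}\hookrightarrow L^\infty$ holds since $2s>1$. Together these bound $\|\mathcal W(u)\|_{L^\infty}$ and $\|\nabla\mathcal W(u)\|_{L^p}$ uniformly in $u$.

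Alternatively, the $L^\infty$ bound can be obtained directly. Using $\|e_j\|_\infty\le2/\pi$,
\[
\|\mathcal W(u)\|_{L^\infty}\le\tfrac{4\rho_\alpha}{\pi}\sum_j\lambda_j^{-\alpha},
\]
which converges since $\alpha>1$. This is exactly why the statement fixes $\alpha>1$.

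The only delicate point is identifying $D(A^s)$ with a closed subspace of $H^{2s}(\Omega)$ for $2s\in(1,2)$ on the square. I would handle this by the explicit sine expansion. Alternatively, I would bound the gradient directly: $\|\nabla\mathcal W(u)\|_{L^p}\le C\sum_j\lambda_j^{\frac12}|w_j|\lesssim\sum_j\lambda_j^{\frac12-\alpha}$, using $\|\nabla e_j\|_\infty\le C\lambda_j^{1/2}$. However, this series converges only if $\alpha>\frac32$, so the Sobolev route via $H^{2s}$ is the one I will use.
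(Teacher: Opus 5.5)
Your proposal is correct and follows the paper's proof essentially step by step. Both arguments get the identity on $\mathcal A_1$ from $\|u\|_\alpha\le\rho_\alpha$, the coefficientwise mean value bound with Lipschitz constant $\eta_1$, dominated convergence for the Gateaux derivative, and the uniform bound $|\mathcal W_j(u)|\le 2\rho_\alpha\lambda_j^{-\alpha}$ combined with Weyl's law and Sobolev embeddings for $s\in(\frac12,\alpha-\frac12)$. Your endpoint choice $p=1/(1-s)$ (the paper takes $2<p<1/(1-s)$) and your direct $L^\infty$ bound via $\|e_j\|_{L^\infty(\Omega)}\le 2/\pi$ are harmless variants.
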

\begin{proof}
We prove the assertions separately. $(i)$
Let $u\in\mathcal A_1$. By \eqref{rhoalpaha},
$$
\|u\|_\alpha^2=\sum_{j=1}^{\infty}\lambda_j^{2\alpha}|u_j|^2
\leq\rho_\alpha^2.
$$
Therefore, for every $j\in\mathbb {Z}^{+}$,
$\lambda_j^{2\alpha}|u_j|^2\leq \rho_\alpha^2,$
and then
$|\frac{\lambda_j^\alpha u_j}{\rho_\alpha}|
\leq1.$
By the definition of $\eta$,
$\eta(
\frac{\lambda_j^\alpha u_j}{\rho_\alpha}
)=\frac{\lambda_j^\alpha u_j}{\rho_\alpha}.$
Hence,
$$
\rho_\alpha\lambda_j^{-\alpha}
\eta(\frac{\lambda_j^\alpha u_j}{\rho_\alpha}
)=u_j.
$$
It follows from \eqref{preparation-map} that
$$
\mathcal W(u)=\sum_{j=1}^{\infty}u_je_j=u.
$$

$(ii)$ For any $u,v\in X^{\frac12}$, the mean value theorem yields
$$
|\rho_\alpha\lambda_j^{-\alpha}
\eta(\frac{\lambda_j^\alpha u_j}{\rho_\alpha}
)-\rho_\alpha\lambda_j^{-\alpha}
\eta(\frac{\lambda_j^\alpha v_j}{\rho_\alpha}
)|\leq \eta_1|u_j-v_j|.
$$
Consequently,
$$
\begin{aligned}
\|\mathcal W(u)-\mathcal W(v)\|_{\frac12}^2
&=\sum_{j=1}^{\infty}
\lambda_j|\mathcal W_j(u)-\mathcal W_j(v)|^2
\\
&\leq \eta_1^2\sum_{j=1}^{\infty}
\lambda_j|u_j-v_j|^2\\
&=\eta_1^2\|u-v\|_{\frac12}^2,
\end{aligned}
$$
where $
\mathcal W_j(u)=\rho_\alpha\lambda_j^{-\alpha}
\eta(\frac{\lambda_j^\alpha u_j}{\rho_\alpha}).$ In addition, we note that $\eta(0)=0$, so that
$\mathcal W(0)=0.$
Thus,
$$
\|\mathcal W(u)\|_{\frac12}
\leq \eta_1\|u\|_{\frac12},
$$
which in particular shows that $\mathcal W$ is well defined on all of $X^{\frac12}$.

$(iii)$ For every $u\in X^{\frac12}$ and $s<\alpha-\frac12$, we have
$$
\begin{aligned}
\|\mathcal W(u)\|_s^2
&=\sum_{j=1}^{\infty}\lambda_j^{2s}|\mathcal W_j(u)|^2
\\
&\leq 4\rho_\alpha^2\sum_{j=1}^{\infty}\lambda_j^{-2(\alpha-s)}<\infty.
\end{aligned}
$$
Since $1<\alpha<\frac{3}{2},$ we may choose $s$ such that $\frac12<s<\alpha-\frac12,$ for this $s$, Sobolev embedding theorem gives
$$
X^s\hookrightarrow L^\infty(\Omega),\quad
X^s\hookrightarrow W^{1,p}(\Omega),\quad2<p<\frac{1}{1-s}.
$$
Hence, $(iii)$ holds.

$(iv)$ Let $u,h\in X^{\frac12}$ and $\varepsilon\neq0$. We have
\begin{align*}
\frac{\mathcal W(u+\varepsilon h)-\mathcal W(u)}{\varepsilon}=\sum_{j=1}^{\infty}D_{\varepsilon,j}e_j,
\end{align*}
where
\begin{align*}
D_{\varepsilon,j}:=\rho_\alpha\lambda_j^{-\alpha}
\frac{\eta(\frac{\lambda_j^\alpha(u_j+\varepsilon h_j)}{\rho_\alpha})
-\eta(\frac{\lambda_j^\alpha u_j}{\rho_\alpha})
}{\varepsilon}.
\end{align*}
For every fixed $j\in\mathbb {Z}^{+}$, by the differentiability of $\eta$, we obtain
\begin{align*}
D_{\varepsilon,j}
\longrightarrow
\eta'(\frac{\lambda_j^\alpha u_j}{\rho_\alpha})h_j,
\quad \text{as }\varepsilon\to0.
\end{align*}
Moreover, we obtain
\begin{align*}
\lambda_j
|D_{\varepsilon,j}-\eta'(
\frac{\lambda_j^\alpha u_j}{\rho_\alpha}
)h_j|^2\leq4\eta_1^2\lambda_j|h_j|^2.
\end{align*}
The dominated convergence theorem yields
\begin{align*}
&\|\frac{\mathcal W(u+\varepsilon h)-\mathcal W(u)}{\varepsilon}-\sum_{j=1}^{\infty}\eta'(
\frac{\lambda_j^\alpha u_j}{\rho_\alpha})h_je_j
\|_{\frac12}^2\\
&=\sum_{j=1}^{\infty}
\lambda_j|D_{\varepsilon,j}-\eta'(
\frac{\lambda_j^\alpha u_j}{\rho_\alpha}
)h_j|^2
\longrightarrow0,
\quad \text{as }\varepsilon\to0.
\end{align*}
Hence, $\mathcal W$ is Gateaux differentiable on $X^{\frac12}$ and
\begin{align*}
\mathcal W'(u)h=\sum_{j=1}^{\infty}
\eta'(\frac{\lambda_j^\alpha u_j}{\rho_\alpha})h_je_j.
\end{align*}
Moreover,
\begin{align*}
\|\mathcal W'(u)h\|_{\frac12}^2&=
\sum_{j=1}^{\infty}\lambda_j|\eta'(
\frac{\lambda_j^\alpha u_j}{\rho_\alpha})|^2
|h_j|^2
\\
&\leq\eta_1^2\sum_{j=1}^{\infty}
\lambda_j|h_j|^2=\eta_1^2\|h\|_{\frac12}^2.
\end{align*}
Therefore,
\begin{align*}
\sup_{u\in X^{\frac12}}
\|\mathcal W'(u)\|_{\mathcal L(X^{\frac12})}\leq \eta_1<\infty.
\end{align*}
\end{proof}

We now use the map $\mathcal{W}$ to modify the nonlinear term. Define
\begin{equation}\label{prepared-nonlinearity}
F(u):=\lambda f(\mathcal W(u)),\quad u\in X^{\frac12}.
\end{equation}

\begin{lemma}\label{prepared-nonlinearity-lemma}
The map ${F}:X^{\frac12}\to X^{\frac12}$ is globally bounded and globally Lipschitz continuous. More precisely, there exist constants $\tilde K,$ $\tilde{L}>0$ such that
\begin{equation}\label{tildeff}
	\begin{aligned}
	\|{F}(u)\|_{\frac12}&\leq\tilde K,\quad u\in X^{\frac12},\\
  \|{F}(u)-{F}(v)\|_{\frac12}
 &\leq \tilde{L}\|u-v\|_{\frac12}, \quad u,v\in X^{\frac12}.
\end{aligned}
\end{equation}
\end{lemma}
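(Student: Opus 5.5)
Write $w:=\mathcal W(u)$ and $v':=\mathcal W(v)$. By Lemma \ref{preparation-map-lemma}(iii) there are $s>\frac12$, $p>2$ and a constant $R_W$ such that $\|w\|_{L^\infty}+\|\nabla w\|_{L^p}\le R_W$ for every $u\in X^{\frac12}$. Since $f\in C^2(\mathbb R)$, on the interval $[-R_W,R_W]$ we have the uniform bounds $|f|,|f'|,|f''|\le C_f$. Because $f(0)=0$ and $w$ vanishes on $\partial\Omega$, $f(w)\in H_0^1(\Omega)=X^{\frac12}$, and $\|\cdot\|_{\frac12}$ is equivalent to $\|\nabla\cdot\|$.

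\textbf{Boundedness.} Using $\nabla f(w)=f'(w)\nabla w$, we get
\[
\|F(u)\|_{\frac12}\le \lambda C_f\|\nabla w\|\le \lambda C_f|\Omega|^{\frac12-\frac1p}\|\nabla w\|_{L^p}\le \tilde K .
\]

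\textbf{Lipschitz continuity.} Split the gradient of the difference as
\[
\nabla\bigl(f(w)-f(v')\bigr)=f'(w)\nabla(w-v')+\bigl(f'(w)-f'(v')\bigr)\nabla v'.
\]
The first term satisfies $\|f'(w)\nabla(w-v')\|\le C_f\|w-v'\|_{\frac12}\le C_f\eta_1\|u-v\|_{\frac12}$ by Lemma \ref{preparation-map-lemma}(ii). For the second term, the mean value theorem gives $|f'(w)-f'(v')|\le C_f|w-v'|$. Applying H\"older's inequality with exponents $\frac{2p}{p-2}$ and $p$,
\[
\|(f'(w)-f'(v'))\nabla v'\|\le C_f\|w-v'\|_{L^{2p/(p-2)}}\|\nabla v'\|_{L^p}\le C_fR_W\|w-v'\|_{L^{2p/(p-2)}} .
\]
In two dimensions $H_0^1(\Omega)\hookrightarrow L^q(\Omega)$ for every $q<\infty$, so in particular for $q=\frac{2p}{p-2}$. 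Hence this term is also bounded by $C\eta_1\|u-v\|_{\frac12}$, which gives $\tilde L$.

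\textbf{Main obstacle.} The difficulty is the second term. Without the uniform $W^{1,p}$ bound with $p>2$ supplied by $\mathcal W$, we would need an $L^\infty$ control of $w-v'$, and $H^1$ does not embed into $L^\infty$ in two dimensions. The argument only works because the loss in the $L^\infty$ exponent is traded for the gain $p>2$ in the gradient integrability. A minor point also needs checking: the chain rule $\nabla f(w)=f'(w)\nabla w$ is valid for $w\in H^1\cap L^\infty$ and $f\in C^1$.
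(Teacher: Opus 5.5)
Your proposal is correct and follows essentially the same route as the paper: you use the uniform $L^\infty$ and $W^{1,p}$ ($p>2$) bounds on $\mathcal W(u)$ from Lemma \ref{preparation-map-lemma}(iii), the chain-rule estimate for boundedness, the same splitting of $\nabla\bigl(f(\mathcal W(u))-f(\mathcal W(v))\bigr)$, and H\"older with $\frac12=\frac1p+\frac1q$ (your $q=\frac{2p}{p-2}$) combined with $X^{\frac12}\hookrightarrow L^q(\Omega)$ and the Lipschitz continuity of $\mathcal W$. The only difference is that you track the embedding constant and $\eta_1$ explicitly, whereas the paper absorbs them into $C_R(1+R)$.
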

\begin{proof}
It follows from $(iii)$ of Lemma \ref{preparation-map-lemma} that there exist $p>2$ and $R>0$ such that
\begin{align}\label{bound1}
\|\nabla\mathcal{W}(u)\|_{L^{p}}+\|\mathcal W(u)\|_{L^\infty(\Omega)}\leq R,
\quad u\in X^{\frac12}.
\end{align}
Since $f\in C^2(\mathbb R)$, there exists a constant $C_{R}$ such that
\begin{align}\label{bound2}
\sup_{|s|\leq R} |f(s)|+|f^{\prime}(s)|+|f^{\prime\prime}(s)|\leq C_{R}.
\end{align}
We first prove the global boundedness of $F$. Since $f(0)=0$ and $\mathcal W(u)\in X^{\frac12}$, we have $f(\mathcal W(u))\in X^{\frac12}$. Moreover, there exists a constant $\tilde K>0$ such that
\begin{align*}
    \|{F}(u)\|_{\frac12}&=\lambda\|f(\mathcal{W}(u))\|_{\frac{1}{2}} \\ 
    &\leq \lambda\|\nabla f(\mathcal{W}(u))\| \\
    &=\lambda\|f^{\prime}(\mathcal{W}(u)) \nabla \mathcal{W}(u)\|\\
    &\leq\lambda C_R\|\nabla \mathcal{W}(u)\|\leq \tilde K.
\end{align*}
We next prove the global Lipschitz continuity. Let $q<\infty$ satisfy
\begin{align*}
\frac12=\frac1p+\frac1q.
\end{align*}
Since $p>2$, we have $q<\infty$, and Sobolev embedding theorem gives $X^{\frac12}\hookrightarrow L^q(\Omega).$
Thus, for $u,v\in X^{\frac12}$, it follows from 
\eqref{bound1}, \eqref{bound2} and the global Lipschitz continuity of $\mathcal{W}$ on $X^{\frac12}$ that 
\begin{align*}
\|f(\mathcal W(u))-f(\mathcal W(v))\|_{\frac12}
&=\|\nabla(f(\mathcal W(u))-f(\mathcal W(v)))\|\\
&=\|f'(\mathcal W(u))\nabla(\mathcal W(u)-\mathcal W(v))+(f'(\mathcal W(u))-f'(\mathcal W(v)))\nabla\mathcal W(v)\|\\
&\leq C_{R}\|\mathcal W(u)-\mathcal W(v)\|_{\frac12}+C_{R}\|\mathcal W(u)-\mathcal W(v)\|_{L^q(\Omega)}
\|\nabla\mathcal W(v)\|_{L^p(\Omega)}
\\
&\leq C_{R}(1+R)\|\mathcal W(u)-\mathcal W(v)\|_{\frac12}.
\end{align*}
Therefore, there exists a constant $\tilde L$ such that
\begin{align*}
\|F(u)-F(v)\|_{\frac12}\leq \tilde L\|u-v\|_{\frac12}, \quad u,v\in X^{\frac12}.
\end{align*}
This completes the proof.
\end{proof}

We now consider the following modified problem:
\begin{equation}\label{prepared-Kirchhoff}
 \left\{
 \begin{aligned}
  &\partial_tu+a(\|u\|_{\frac12}^2)Au=F(u),\quad t>0,\\
  &u(0)=u_0\in X^{\frac12},
 \end{aligned}
 \right.
\end{equation}
which can be regarded as a special case of the abstract model \eqref{nonlocal problem} by taking $\beta=0$, $\mathcal H=X^{\frac12}=H_0^1(\Omega)$, endowed with the inner product $(\cdot,\cdot)_{\frac12}$, and
$A=-\Delta:X^{\frac32}\subset X^{\frac12}\to X^{\frac12},$ subject to homogeneous Dirichlet boundary conditions. With respect to this Hilbert space structure, $A$ is positive self-adjoint with compact inverse. Moreover, if $\{e_j\}_{j=1}^{\infty}$ is the $L^2(\Omega)$ orthonormal eigenfunctions of $A$, then 
$\{\lambda_j^{-1/2}e_j\}_{j=1}^{\infty}$ is the orthonormal eigenfunctions of $X^{\frac12}$ with the same eigenvalues $\{\lambda_j\}_{j=1}^{\infty}$. By Lemma \ref{prepared-nonlinearity-lemma}, all assumptions on the nonlinear term in Section~2 are satisfied. Consequently, by applying a cut-off for the nonlocal coefficient $a(\|\nabla u\|^2)$ analogous to the one used above and applying Lemma~\ref{two square}, we conclude that the global attractor $\mathcal{A}_1$ of problem \eqref{2DKir} is contained in a Lipschitz graph over a suitable finite-dimensional subspace. This extends the corresponding one-dimensional result established in \cite{Xiaoqing} to the two-dimensional setting.

\noindent{\bf Data Availability} No datasets were generated or analyzed during the current study.\\ 

\noindent{\bf Declarations} The authors declare no conflict of interest.\\

\noindent{\bf Acknowledgements} The authors would like to thank the anonymous referee for helpful suggestions which helped to improve this paper. This work was carried out during the first author’s visit to the Institute of Mathematical and Computer Sciences (ICMC) at the University of São Paulo. She wishes to express her sincere gratitude to the members of ICMC for their warm hospitality and kindness. X. Yang was supported by China Scholarship Council File no. 202306180068. A. N. Carvalho was partially supported by Grants FAPESP
2020/14075-6 and CNPq 308902/2023-8. C. Sun was partially supported by NSFC 12271227.


\begin{thebibliography}{10}

\bibitem{Guo}
M.~Abu~Hamed, Y.~Guo, and E.~S. Titi.
\newblock Inertial manifolds for certain subgrid-scale {$\alpha$}-models of turbulence.
\newblock {\em SIAM J. Appl. Dyn. Syst.}, 14(3):1308--1325, 2015.

\bibitem{Estefani}
J.~M. Arrieta, A.~N. Carvalho, E.~M. Moreira, and J.~Valero.
\newblock Bifurcation and hyperbolicity for a nonlocal quasilinear parabolic problem.
\newblock {\em Adv. Differential Equations}, 29(1-2):1--26, 2024.

\bibitem{Brouwer}
L.~E.~J. Brouwer.
\newblock Zur {I}nvarianz des {$n$}-dimensionalen {G}ebiets.
\newblock {\em Math. Ann.}, 72(1):55--56, 1912.

\bibitem{valeroJDDE}
R.~Caballero, P.~Mar\'in-Rubio, and J.~Valero.
\newblock Existence and characterization of attractors for a nonlocal reaction-diffusion equation with an energy functional.
\newblock {\em J. Dynam. Differential Equations}, 34(1):443--480, 2022.

\bibitem{Carvalho-Langa-Robinson-13}
A.~N. Carvalho, J.~A. Langa, and J.~C. Robinson.
\newblock {\em Attractors for infinite-dimensional non-autonomous dynamical systems}, volume 182 of {\em Applied Mathematical Sciences}.
\newblock Springer, New York, 2013.

\bibitem{Carvalho2022}
A.~N. Carvalho, P.~Lappicy, E.~M. Moreira, and A.~N. Oliveira-Sousa.
\newblock A unified theory for inertial manifolds, saddle point property and exponential dichotomy.
\newblock {\em J. Differential Equations}, 416:1462--1495, 2025.

\bibitem{chipot2015}
M.~Chipot and T.~Savitska.
\newblock Asymptotic behaviour of the solutions of nonlocal {$p$}-{L}aplace equations depending on the {$L^p$} norm of the gradient.
\newblock {\em J. Elliptic Parabol. Equ.}, 1:63--74, 2015.

\bibitem{chipot2003}
M.~Chipot, V.~Valente, and G.~Vergara~Caffarelli.
\newblock Remarks on a nonlocal problem involving the {D}irichlet energy.
\newblock {\em Rend. Sem. Mat. Univ. Padova}, 110:199--220, 2003.

\bibitem{Chueshov}
I.~Chueshov.
\newblock {\em Dynamics of quasi-stable dissipative systems}.
\newblock Universitext. Springer, Cham, 2015.

\bibitem{constantin}
P.~Constantin, C.~Foias, B.~Nicolaenko, and R.~Temam.
\newblock {\em Integral manifolds and inertial manifolds for dissipative partial differential equations}, volume~70 of {\em Applied Mathematical Sciences}.
\newblock Springer-Verlag, New York, 1989.

\bibitem{FST88}
C.~Foias, G.~R. Sell, and R.~Temam.
\newblock Inertial manifolds for nonlinear evolutionary equations.
\newblock {\em J. Differential Equations}, 73(2):309--353, 1988.

\bibitem{Guo2018}
C.~G. Gal and Y.~Guo.
\newblock Inertial manifolds for the hyperviscous {N}avier-{S}tokes equations.
\newblock {\em J. Differential Equations}, 265(9):4335--4374, 2018.

\bibitem{Guo2024}
Y.~Guo.
\newblock Inertial manifolds for the two-dimensional hyperviscous navier-stokes equations.
\newblock arXiv:2401.14642, 2024.

\bibitem{Hale-ODE}
J.~K. Hale.
\newblock {\em Ordinary differential equations}.
\newblock Robert E. Krieger Publishing Co., Inc., Huntington, NY, second edition, 1980.

\bibitem{Henry}
D.~Henry.
\newblock {\em Geometric theory of semilinear parabolic equations}, volume 840 of {\em Lecture Notes in Mathematics}.
\newblock Springer-Verlag, Berlin-New York, 1981.

\bibitem{ANA2018}
A.~Kostianko.
\newblock Inertial manifolds for the 3{D} modified-{L}eray-{$\alpha $} model with periodic boundary conditions.
\newblock {\em J. Dynam. Differential Equations}, 30(1):1--24, 2018.

\bibitem{li2024}
A.~Kostianko, X.~Li, C.~Sun, and S.~Zelik.
\newblock Inertial manifolds via spatial averaging revisited.
\newblock {\em SIAM J. Math. Anal.}, 54(1):268--305, 2022.

\bibitem{Anna2023}
A.~Kostianko, C.~Sun, and S.~Zelik.
\newblock Inertial manifolds for 3{D} complex {G}inzburg-{L}andau equations with periodic boundary conditions.
\newblock {\em Indiana Univ. Math. J.}, 72(6):2403--2429, 2023.

\bibitem{Anna2015}
A.~Kostianko and S.~Zelik.
\newblock Inertial manifolds for the 3{D} {C}ahn-{H}illiard equations with periodic boundary conditions.
\newblock {\em Commun. Pure Appl. Anal.}, 14(5):2069--2094, 2015.

\bibitem{Li2020}
X.~Li and C.~Sun.
\newblock Inertial manifolds for the 3{D} modified-{L}eray-{$\alpha$} model.
\newblock {\em J. Differential Equations}, 268(4):1532--1569, 2020.

\bibitem{MS1988}
J.~Mallet-Paret and G.~R. Sell.
\newblock Inertial manifolds for reaction diffusion equations in higher space dimensions.
\newblock {\em J. Amer. Math. Soc.}, 1(4):805--866, 1988.

\bibitem{James2000}
J.~R. Munkres.
\newblock {\em Topology}.
\newblock Prentice Hall, Inc., Upper Saddle River, NJ, second edition, 2000.

\bibitem{Richards}
I.~Richards.
\newblock On the gaps between numbers which are sums of two squares.
\newblock {\em Adv. in Math.}, 46(1):1--2, 1982.

\bibitem{Robinson}
J.~C. Robinson.
\newblock {\em Infinite-dimensional dynamical systems}.
\newblock Cambridge Texts in Applied Mathematics. Cambridge University Press, Cambridge, 2001.
\newblock An introduction to dissipative parabolic PDEs and the theory of global attractors.

\bibitem{sell2002}
G.~R. Sell and Y.~You.
\newblock {\em Dynamics of evolutionary equations}, volume 143 of {\em Applied Mathematical Sciences}.
\newblock Springer-Verlag, New York, 2002.

\bibitem{Temam}
R.~Temam.
\newblock {\em Infinite-dimensional dynamical systems in mechanics and physics}, volume~68 of {\em Applied Mathematical Sciences}.
\newblock Springer-Verlag, New York, second edition, 1997.

\bibitem{Yagi}
A.~Yagi.
\newblock {\em Abstract parabolic evolution equations and their applications}.
\newblock Springer Monographs in Mathematics. Springer-Verlag, Berlin, 2010.

\bibitem{Xiaoqing}
X.~Yang, A.~N. Carvalho, and E.~M. Moreira.
\newblock Nonlocal kirchhoff-type parabolic equations in {$L^2$}: Reduction to finite dimension.
\newblock {\em Journal of Nonlinear Science}, 36:75, 2026.

\bibitem{Zelik}
S.~Zelik.
\newblock Inertial manifolds and finite-dimensional reduction for dissipative {PDE}s.
\newblock {\em Proc. Roy. Soc. Edinburgh Sect. A}, 144(6):1245--1327, 2014.

\end{thebibliography}

\end{document}